\newtheorem{dfn}{Definition}
\newtheorem{thm}{Theorem}
\newtheorem{prop}{Proposition}
\newtheorem{lem}{Lemma}
\newtheorem{cor}{Corollaly}
\newtheorem{rem}{Remark}
\subjclass[2010]{Primary 49Q05; Secondary 53C42.}
\keywords{area-minimizing surface, R-space, cone.}
\thanks{The second author was partly supported by the Grant-in-Aid for Science Research (C) No. 26400073, JSPS}
\begin{document}
\title{Area-minimizing cones over minimal embeddings of R-spaces}
\author{Shinji Ohno}
\address{Department of Mathematics and Information Sciences,
Tokyo Metropolitan University,
Minami-Osawa, Hachioji-shi,
Tokyo, 192-0397 Japan}
\email{oono-shinji@ed.tmu.ac.jp}

\author{Takashi Sakai}
\email{sakai-t@tmu.ac.jp}

\maketitle

\begin{abstract}
In this paper, by constructing area-nonincreasing retractions,
we prove area-minimizing properties of some cones
over minimal embeddings of R-spaces.
\end{abstract}

\section{Introduction}
Let $C_{B}$ be the cone over a submanifold $B$ of the unit sphere $S^{n-1}$ in $\mathbb{R}^{n}$.
The cone $C_B$ is minimal in $\mathbb{R}^n$ if and only if $B$ is minimal in $S^{n-1}$.
We call a cone $C_B$ area-minimizing if its truncated cone $C_B^1$
has the least area among all integral currents with the same boundary $B$. 
Solutions of Plateau's problem can have singularities as integral currents.
At an isolated conical singularity, the tangent cone is area-minimizing.
Hence, in order to understand such singularities, we should study area-minimizing properties of minimal cones.

Lawlor \cite{L} gave a sufficient condition, so-called the {\it curvature criterion},
for a cone to be area-minimizing, using an area-nonincreasing retraction.
With this criterion, he obtained a complete classification of area-minimizing cones
over products of spheres and the first examples of area-minimizing cones over
nonorientable manifolds.
Kerckhove \cite{Ke} proved that some cones over isolated orbits of the adjoint representations of
$\mathrm{SU}(n)$ and $\mathrm{SO}(n)$ are area-minimizing.
A symmetric R-space can be minimally embedded in the sphere in a canonical way.
Hirohashi, Kanno and Tasaki \cite{HKT} constructed area-nonincreasing retractions
onto the cones over symmetric R-spaces associated with symmetric pairs of type $\mathrm{B}_l$.
Furthermore, Kanno \cite{Ka} proved that cones over some symmetric R-spaces are area-minimizing.

In this paper, we study area-minimizing properties of
cones over minimal embeddings of R-spaces, not only symmetric R-spaces.
In Theorem \ref{thm:retr}, we give a construction of retractions generalizing the method given in \cite{HKT}.
Applying this theorem we give some examples of area-minimizing cones over minimal embeddings of R-spaces.
In Section \ref{reducible}, we discuss area-minimizing properties of cones
over products of R-spaces.

\section{Preliminaries}\label{Preliminaries}
\subsection{Area-minimizing cones.} 
	Let $B$ be a submanifold of the unit sphere $S^{n-1}$ in $\mathbb{R}^{n}$.
	We define the cone $C_{B}$ and the truncated cone $C_{B}^{1}$ over $B$ by
	\begin{eqnarray*}
	C_{B}&=& \{ tx \in \mathbb{R}^{n} \mid 0\leq t,x\in B \},\\
	C_{B}^{1}&=& \{ tx \in \mathbb{R}^{n} \mid 0\leq t \leq 1,x\in B\}.
	\end{eqnarray*}
	Both $C_{B}$ and $C_{B}^{1}$ have an isolated singularity at the origin $0 \in \mathbb{R}^{n}$.
	\begin{dfn}\rm
	A cone $C_{B}$ is called area-minimizing if $C_{B}^{1}$ has the least area among all integral currents with boundary B.
	\end{dfn}
	Let $V$ and $W$ be two vector spaces with inner products, and let $F:V\to W$ be a linear map.
	Suppose $\dim V=n\geq \dim W =m$.
	We define the Jacobian $JF$ of $F$ by
	$$JF=\sup \{ \|F(v_{1}) \wedge \cdots \wedge F(v_{m}) \| \},$$
	where $\{ v_{1},\ldots ,v_{m}\}$ runs over all orthonormal systems of $V$.
	If $F$ is not surjective, then $JF=0$. 
	If $F$ is surjective, then   
	$$JF=\|F(v_{1}) \wedge \cdots \wedge F(v_{m}) \|$$
	for any orthonormal basis of $(\ker F)^{\perp}$.

	\begin{dfn}\rm
	A retraction $\Phi: \mathbb{R}^{n} \to C_{B}$ is called differentiable if $\Phi: \mathbb{R}^{n}\setminus \Phi^{-1}(0) \to C_{B}\setminus \{0 \} $ is $C^1$.
	A differentiable retraction $\Phi$ is called area-nonincreasing if 
	$J(d\Phi)_{x} \leq 1$ holds for all $x \in \mathbb{R}^{n}\setminus \Phi^{-1}(0)$.
	\end{dfn}

\begin{prop}
	Let $B$ be a compact submanifold of the unit sphere $S^{n-1}$ in $\mathbb{R}^{n}$. 
	Suppose that there exists an area-nonincreasing retraction $\Phi$ from $\mathbb{R}^{n}$ to $C_{B}$.
	Then $C_{B}$ is area-minimizing.
	\end{prop}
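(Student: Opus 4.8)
The plan is to run the classical retraction argument that underlies Lawlor's curvature criterion: push an arbitrary competitor forward by $\Phi$, use that $\Phi$ neither increases mass nor changes the boundary, and then show by a dimension count that the image must be the truncated cone itself. Throughout, $\mathbf{M}(\cdot)$ denotes mass (the area), and I regard $B=\partial C_B^{1}$ as the integer rectifiable current of multiplicity one carried by the submanifold $B$. Let $T$ be any integral current with $\partial T=B$; we must prove $\mathbf{M}(C_B^{1})\le\mathbf{M}(T)$. We may assume $\mathbf{M}(T)<\infty$ and, after a standard truncation (slice $T$ by a sphere $\{|x|=R\}$ for generic large $R$ --- the slice is a cycle since $\partial T=B$ lies in the unit sphere --- discard the exterior part, cap the slice off with the cone over it from the origin, and let $R\to\infty$ along a sequence along which the coarea inequality forces the mass of the cap to $0$), that $\operatorname{spt}T$ is compact. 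Set $m=1+\dim B$, so that $C_B^{1}$ and $T$ are $m$-dimensional.

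First I would form $S:=\Phi_{\#}T$. Since $\Phi$ retracts onto $C_B$, it is the identity on $C_B\supset B$, so $\partial S=\Phi_{\#}(\partial T)=\Phi_{\#}B=B$, and $\operatorname{spt}S\subset\Phi(\operatorname{spt}T)\subset C_B$ is compact. The area-nonincreasing hypothesis then bounds the mass: for $\|T\|$-almost every $x$ the Jacobian of the restriction of $(d\Phi)_{x}$ to the approximate tangent plane $T_{x}T$ is at most $J(d\Phi)_{x}\le 1$ --- restricting a linear map to a subspace cannot increase its Jacobian --- and integrating the standard push-forward estimate $\mathbf{M}(\Phi_{\#}T)\le\int J((d\Phi)_{x}|_{T_{x}T})\,d\|T\|(x)$ gives $\mathbf{M}(S)\le\mathbf{M}(T)$.

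It remains to identify $S$ with $C_B^{1}$. Let $R$ be the cone over $S$ with vertex the origin (the cone construction, joining $S$ to $0$); it is an integral current of dimension $m+1$. Because $C_B$ is a cone with vertex $0$ and $\operatorname{spt}S\subset C_B$, also $\operatorname{spt}R\subset C_B$. But $C_B$, being the cone over the $(m-1)$-dimensional manifold $B$, has $\mathcal{H}^{m+1}(C_B)=0$, and an integer rectifiable current of dimension $m+1$ supported in a set of vanishing $(m+1)$-dimensional Hausdorff measure must vanish; hence $R=0$. Therefore $0=\partial R=S-\widehat{B}$, where $\widehat{B}$ is the cone over $B$ from the origin; since $C_B^{1}$ is itself that cone, $S=C_B^{1}$. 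Combining with the previous paragraph, $\mathbf{M}(C_B^{1})=\mathbf{M}(S)\le\mathbf{M}(T)$, as desired. (Alternatively, $S=C_B^{1}$ follows from the constancy theorem applied to the cycle $S-C_B^{1}$ on the connected components of the $m$-manifold $C_B\setminus\{0\}$ together with $\mathbf{M}(S)<\infty$.)

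The step I expect to demand the most care is the push-forward itself. The retraction is only assumed $C^{1}$, hence locally Lipschitz, on $\mathbb{R}^{n}\setminus\Phi^{-1}(0)$, and the bound $J(d\Phi)\le 1$ gives no control on the operator norm of $d\Phi$, so neither the definition of $\Phi_{\#}T$ nor the identities $\partial\Phi_{\#}=\Phi_{\#}\partial$ and the mass estimate are automatic near $\Phi^{-1}(0)$. I would handle this by fixing a small neighborhood $U_{\varepsilon}$ of $\Phi^{-1}(0)\cap\operatorname{spt}T$ on which $\Phi$ is Lipschitz, pushing forward and estimating the restriction of $T$ to $\mathbb{R}^{n}\setminus\overline{U_{\varepsilon}}$ (the discarded part has image the single point $0$, hence contributes nothing), and then letting $\varepsilon\to 0$: the mass bound survives by lower semicontinuity and the boundary identity by continuity of $\partial$. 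Everything else --- the Jacobian comparison, the cone construction, and the Hausdorff-dimension vanishing --- is routine.
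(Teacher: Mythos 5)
Your proof is correct and rests on the same mechanism as the paper's: retract the competitor through $\Phi$ and use $J(d\Phi)\le 1$ to control mass. Where the two diverge is the covering step. The paper simply asserts the set inclusion $\Phi(S)\supset C_B^{1}$ and then compares volumes of image sets; that inclusion is the real content of the proposition (a degree-type fact, true because $\Phi$ fixes $B$ pointwise and $S$ bounds $B$), and the paper offers no justification for it, besides working only with sets and so ignoring multiplicity and orientation. You replace it by the stronger current-level identity $\Phi_{\#}T=C_B^{1}$, obtained by coning $\Phi_{\#}T$ to the origin inside $C_B$ and observing that an integer rectifiable current of dimension $m+1=\dim B+2$ supported in the $m$-dimensional set $C_B$ must vanish (equivalently, by the constancy theorem on $C_B\setminus\{0\}$). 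This is the standard rigorous form of the retraction argument, and it genuinely fills the paper's gap. Two caveats on your write-up. First, your treatment of $\Phi^{-1}(0)$ --- the one place where $\Phi$ is merely continuous, so that $\Phi_{\#}T$ is not defined outright --- correctly isolates a difficulty the paper hides inside the notation $\int_{S\setminus\Phi^{-1}(0)}$, but your exhaustion is only a sketch: you still owe an argument that the pushed-forward slice of $T$ along $\partial U_{\varepsilon}$ (a cycle supported in a set shrinking to $\{0\}$) tends to $0$ in the flat norm, since its mass is controlled only by the local Lipschitz constant of $\Phi$ near $\partial U_{\varepsilon}$, which may blow up as $\varepsilon\to 0$; this can be repaired by a coarea selection of $U_{\varepsilon}$ making the slice mass small, but it is not automatic. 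Second, two slips of the pen: $\Phi$ is Lipschitz on the compact set $\operatorname{spt}T\setminus U_{\varepsilon}$, not on $U_{\varepsilon}$ itself, and the discarded piece has image a small neighborhood of $0$ in $C_B$, not the single point $0$. Neither affects the conclusion.
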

	\begin{proof}
	Let $S$ be an integral current which has the same boundary $B$ as $C_{B}^{1}$.
	Since $\Phi(S) \supset C_{B}^{1}$, we have 
	\begin{eqnarray*}
	\mathrm{Vol}(C_{B}^{1}) &\leq &\mathrm{Vol}(\Phi (S)) =\mathrm{Vol}(\Phi (S)\setminus \{ 0\}) \leq  \int_{S \setminus \Phi^{-1}(0)} \| d\Phi (e_{1}\wedge \cdots \wedge e_{k})\| d\mu_{S}\\
	&\leq & \int_{S \setminus \Phi^{-1}(0)} J(d\Phi)_{x} d\mu_{S} \leq  \int_{S \setminus \Phi^{-1}(0)} 1 d\mu_{S} \leq  \int_{S} 1 d\mu_{S} = \mathrm{Vol} (S),\\
	\end{eqnarray*}
	where $\{ e_{1}, \ldots , e_{k} \}$ is an orthonormal frame of $S$.
	\end{proof}
	If $C_{B}$ is area-minimizing, then $C_{B}$ is minimal in $\mathbb{R}^{n}$.
	Therefore, to find area-minimizing cones, it suffices to consider cones over minimal submanifolds of $S^{n-1}$.
	For this purpose, we use $s$-representations, which are the linear isotropy representations of Riemannian symmetric spaces.
	
\subsection{Riemannian symmetric pairs and restricted root systems.} 
	Let $G$ be a connected Lie group and $\theta$ be an involutive automorphism of $G$. 
	We denote by $F(\theta , G)$ the fixed point set of $\theta$,
	and we denote by $F(\theta ,G)_{0}$ the identity component of $F(\theta , G)$.
	For a closed subgroup $K$ of $G$, the pair $(G,K)$ is said to be a Riemannian symmetric pair if $F(\theta , G)_{0} \subset K \subset F(\theta , G)$
	and $\mathrm{Ad}(K)$ is compact.
	Let $(G,K)$ be a Riemannian symmetric pair, and $\mathfrak{g}$ and $\mathfrak{k}$ be Lie algebras of $G$ and $K$, respectively. 
	We immediately see that  
	$$\mathfrak{k}= \{X \in \mathfrak{g} \mid d\theta (X)=X \}.$$
	We put
	$$\mathfrak{m}= \{X \in \mathfrak{g} \mid d\theta (X)=-X \}.$$
	We denote by $\langle \cdot , \cdot \rangle$ an inner product on $\mathfrak{g}$ which is invariant under the actions of $\mathrm{Ad}(K)$ and $d\theta$.
	Then $\langle \cdot , \cdot \rangle$ induces a left-invariant metric on $G$ and a $G$-invariant metric on $M=G/K$ to be a Riemannian symmetric space,
	which are denoted by the same symbol $\langle \cdot , \cdot \rangle$.
	Since $d\theta$ is involutive, we have an orthogonal direct sum decomposition of $\mathfrak{g}$:
	$$\mathfrak{g}=\mathfrak{k}+\mathfrak{m}.$$
	This decomposition is called the canonical decomposition of $(\mathfrak{g}, \mathfrak{k})$.\
	For the origin $o \in G/K$, we can identify the tangent space $T_{o}(G/K)$ with $\mathfrak{m}$ by the differential of the natural projection $\pi :G \to G/K$.
	
	In this paper, we consider only Riemannian symmetric spaces of compact type. 
	We suppose that $G$ is compact and semisimple.
	Take and fix a maximal abelian subspace $\mathfrak{a}$ in $\mathfrak{m}$ and a maximal abelian subalgebra $\mathfrak{t}$ in $\mathfrak{g}$ including $\mathfrak{a}$. 
	For $\lambda \in \mathfrak{t}$, we put 
	$$\tilde{\mathfrak{g}}_{\lambda} =\{X \in \mathfrak{g}^{\mathbb{C}} \mid [H,X] = \sqrt{-1} \langle \lambda ,H \rangle X \ (H \in \mathfrak{t})\}$$
	and define the root system $\tilde{R}$ of $\mathfrak{g}$ by
	$$\tilde{R}= \{\lambda \in \mathfrak{t} \setminus \{0\} \mid \tilde{\mathfrak{g}}_{\lambda} \neq \{0\} \}.$$
	For $\lambda \in \mathfrak{a}$, we put 
	$$\mathfrak{g}_{\lambda} =\{X \in \mathfrak{g}^{\mathbb{C}} \mid [H,X] = \sqrt{-1} \langle \lambda ,H \rangle X \ (H \in \mathfrak{a})\}$$
	and define the restricted root system $R$ of $(\mathfrak{g},\mathfrak{k})$ by 
	$$R= \{\lambda \in \mathfrak{a} \setminus \{0\} \mid \mathfrak{g}_{\lambda} \neq \{0\} \}.$$
	Denote the orthogonal projection from $\mathfrak{t}$ to $\mathfrak{a}$ by $H \mapsto \overline{H}$.
	We extend a basis of $\mathfrak{a}$ to that of $\mathfrak{t}$ and define a lexicographic orderings $>$ on $\mathfrak{a}$ and $\mathfrak{t}$ with respect to these basis.
	Then for $H \in \mathfrak{t}$, $\overline{H} >0$ implies $H>0$.
	We denote by $\tilde{F}$ the fundamental system of $\tilde{R}$ with respect to $>$,
	by $F$ the fundamental system of $R$ with respect to $>$.
	We define 
	$$\tilde{R}_{0}=\{\lambda \in \tilde{R}\mid \overline{\lambda} =0\},\quad \tilde{F}_{0}=\{\alpha \in \tilde{F}\mid \overline{\alpha} =0\}.$$
	Then we have 
	$$
	R=\{\overline{\lambda } \mid \lambda \in \tilde{R} \setminus \tilde{R}_{0}\},\quad F=\{\overline{\alpha } \mid \alpha \in \tilde{F} \setminus \tilde{F}_{0}\}.
	$$
	We denote the set of positive roots  by 
	$$
	\tilde{R}_{+}=\{\lambda \in \tilde{R} \mid \lambda >0\},\quad R_{+}=\{\lambda \in R \mid \lambda >0\}
	.$$
	We put
	$$\mathfrak{k}_{0}=\{X \in \mathfrak{k} \mid [H, X]=0 \ (H \in \mathfrak{a} )\}$$
	and for each $\lambda \in R_+$
	$$
	\mathfrak{k}_{\lambda}=\mathfrak{k}\cap (\mathfrak{g}_{\lambda}+\mathfrak{g}_{-\lambda}),\quad \mathfrak{m}_{\lambda}=\mathfrak{m}\cap (\mathfrak{g}_{\lambda}+\mathfrak{g}_{-\lambda}).
	$$
	We then have the following lemma.
	\begin{lem}[\cite{T}]\label{lem:decomp}
	(1)\ We have orthogonal direct sum decompositions:
	$$\displaystyle \mathfrak{k}=\mathfrak{k}_{0}+ \sum_{\lambda \in R_{+}} \mathfrak{k}_{\lambda}\ ,\quad \mathfrak{m}=\mathfrak{a}+ \sum_{\lambda \in R_{+}} \mathfrak{m}_{\lambda}\ .$$
	(2)\ For each $\mu \in \tilde{R}_{+} \setminus \tilde{R}_{0}$ there exist $S_{\mu} \in \mathfrak{k}$ and $T_{\mu} \in \mathfrak{m}$
	such that 
	$$\{S_{\mu} \mid \ \mu \in \tilde{R}_{+} ,\ \overline{\mu}=\lambda \},\quad \{T_{\mu}\mid \mu \in \tilde{R}_{+} ,\ \overline{\mu}=\lambda \}$$
	are, respectively, orthonormal bases of $\mathfrak{k}_{\lambda}$ and $\mathfrak{m}_{\lambda}$ 
	and that for any $H \in \mathfrak{a}$
	$$[H, S_{\mu}]= \langle \mu , H\rangle T_{\mu},\quad [H, T_{\mu}]=- \langle \mu , H\rangle S_{\mu}.$$
	\end{lem}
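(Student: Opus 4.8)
The plan is to read everything off the spectral theory of the commuting family $\{\operatorname{ad}(H)\mid H\in\mathfrak{a}\}$ of operators on $\mathfrak{g}$, together with the action of the involution $d\theta$. I take $\langle\cdot,\cdot\rangle$ to be $\operatorname{Ad}(G)$-invariant (for instance the negative of the Killing form), so that each $\operatorname{ad}(H)$, $H\in\mathfrak{g}$, is skew-symmetric; this bi-invariance is needed only for the orthonormality assertions. For $\lambda\in R_{+}$ put $V_{\lambda}=\mathfrak{g}\cap(\mathfrak{g}_{\lambda}+\mathfrak{g}_{-\lambda})$, so that $\mathfrak{k}_{\lambda}=\mathfrak{k}\cap V_{\lambda}$ and $\mathfrak{m}_{\lambda}=\mathfrak{m}\cap V_{\lambda}$; write $V_{0}$ for the real centralizer of $\mathfrak{a}$ in $\mathfrak{g}$, whose complexification is $\mathfrak{g}_{0}$.

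For part (1) I would first diagonalize the $\operatorname{ad}(H)$ ($H\in\mathfrak{a}$) simultaneously over $\mathbb{C}$ to obtain $\mathfrak{g}^{\mathbb{C}}=\bigoplus_{\lambda\in R\cup\{0\}}\mathfrak{g}_{\lambda}$, and then take conjugation-fixed points relative to the real form $\mathfrak{g}$ (using $\overline{\mathfrak{g}_{\lambda}}=\mathfrak{g}_{-\lambda}$) to get the real splitting $\mathfrak{g}=\bigoplus_{\lambda\in R_{+}\cup\{0\}}V_{\lambda}$. This splitting is orthogonal: for distinct $\lambda,\lambda'\in R_{+}\cup\{0\}$ one has $\lambda\ne\pm\lambda'$, so there is $H\in\mathfrak{a}$ with $\langle\lambda,H\rangle^{2}\ne\langle\lambda',H\rangle^{2}$, and the symmetric operator $\operatorname{ad}(H)^{2}$ acts on $V_{\lambda}$ and $V_{\lambda'}$ by the distinct scalars $-\langle\lambda,H\rangle^{2}$ and $-\langle\lambda',H\rangle^{2}$, whence $V_{\lambda}\perp V_{\lambda'}$. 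Each summand is $d\theta$-invariant because $d\theta$ interchanges $\mathfrak{g}_{\lambda}$ and $\mathfrak{g}_{-\lambda}$, so intersecting with the $(\pm1)$-eigenspaces $\mathfrak{k},\mathfrak{m}$ of $d\theta$ gives $\mathfrak{k}=(V_{0}\cap\mathfrak{k})\oplus\bigoplus_{\lambda\in R_{+}}\mathfrak{k}_{\lambda}$ and $\mathfrak{m}=(V_{0}\cap\mathfrak{m})\oplus\bigoplus_{\lambda\in R_{+}}\mathfrak{m}_{\lambda}$, orthogonally. Finally $V_{0}\cap\mathfrak{k}=\mathfrak{k}_{0}$ by definition, and $V_{0}\cap\mathfrak{m}=\mathfrak{a}$: the inclusion $\mathfrak{a}\subseteq V_{0}\cap\mathfrak{m}$ is trivial, and any $X\in\mathfrak{m}$ commuting with $\mathfrak{a}$ makes $\mathfrak{a}+\mathbb{R}X$ an abelian subspace of $\mathfrak{m}$, hence $X\in\mathfrak{a}$ by maximality of $\mathfrak{a}$.

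For part (2) I would sharpen this using the roots of $\mathfrak{t}$. Since $\mathfrak{t}^{\mathbb{C}}$ is a Cartan subalgebra of the semisimple algebra $\mathfrak{g}^{\mathbb{C}}$, each $\tilde{\mathfrak{g}}_{\mu}$ is one-dimensional, and comparing the two root-space decompositions gives $\mathfrak{g}_{\lambda}=\bigoplus_{\mu\in\tilde{R},\,\overline{\mu}=\lambda}\tilde{\mathfrak{g}}_{\mu}$, hence $V_{\lambda}^{\mathbb{C}}=\bigoplus_{\overline{\mu}=\lambda}(\tilde{\mathfrak{g}}_{\mu}\oplus\tilde{\mathfrak{g}}_{-\mu})$; note $\overline{\mu}=\lambda>0$ forces $\mu>0$, so all these $\mu$ lie in $\tilde{R}_{+}$. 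The essential observation is that for $H\in\mathfrak{a}$ one has $\langle\mu,H\rangle=\langle\overline{\mu},H\rangle=\langle\lambda,H\rangle$, so $\operatorname{ad}(H)$ acts on $V_{\lambda}^{+}:=\bigoplus_{\overline{\mu}=\lambda}\tilde{\mathfrak{g}}_{\mu}$ as the single scalar $\sqrt{-1}\langle\lambda,H\rangle$ and on the conjugate subspace $V_{\lambda}^{-}$ as $-\sqrt{-1}\langle\lambda,H\rangle$. Thus the decomposition $V_{\lambda}^{\mathbb{C}}=V_{\lambda}^{+}\oplus V_{\lambda}^{-}$ is independent of $H$, and it is the complexification of a complex structure $J$ on $V_{\lambda}$ with $\operatorname{ad}(H)|_{V_{\lambda}}=\langle\lambda,H\rangle\,J$ for all $H\in\mathfrak{a}$; equivalently $J=\langle\lambda,H_{0}\rangle^{-1}\operatorname{ad}(H_{0})|_{V_{\lambda}}$ for any $H_{0}\in\mathfrak{a}$ with $\langle\lambda,H_{0}\rangle\ne0$, which exhibits $J$ as orthogonal and shows $d\theta\circ J=-J\circ d\theta$ (because $d\theta\circ\operatorname{ad}(H)=-\operatorname{ad}(H)\circ d\theta$ for $H\in\mathfrak{a}$). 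Hence $J$ restricts to a linear isometry $\mathfrak{k}_{\lambda}\to\mathfrak{m}_{\lambda}$ with inverse $-J$, so $\dim\mathfrak{k}_{\lambda}=\dim\mathfrak{m}_{\lambda}$, and since $V_{\lambda}=\mathfrak{k}_{\lambda}\oplus\mathfrak{m}_{\lambda}$ while $\dim_{\mathbb{R}}V_{\lambda}=\dim_{\mathbb{C}}V_{\lambda}^{\mathbb{C}}=2\,\#\{\mu\in\tilde{R}_{+}\mid\overline{\mu}=\lambda\}$, both dimensions equal $\#\{\mu\in\tilde{R}_{+}\mid\overline{\mu}=\lambda\}$. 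I would then take any orthonormal basis of $\mathfrak{k}_{\lambda}$, label its members $S_{\mu}$ by an arbitrary bijection onto $\{\mu\in\tilde{R}_{+}\mid\overline{\mu}=\lambda\}$, and set $T_{\mu}:=JS_{\mu}$; then $\{T_{\mu}\}$ is an orthonormal basis of $\mathfrak{m}_{\lambda}$, and for $H\in\mathfrak{a}$ one reads off $[H,S_{\mu}]=\operatorname{ad}(H)S_{\mu}=\langle\lambda,H\rangle JS_{\mu}=\langle\mu,H\rangle T_{\mu}$ and $[H,T_{\mu}]=\langle\lambda,H\rangle J^{2}S_{\mu}=-\langle\mu,H\rangle S_{\mu}$, as required. (If one wants the $S_{\mu},T_{\mu}$ written directly in terms of root vectors, take unit $E_{\mu}\in\tilde{\mathfrak{g}}_{\mu}$, form $E_{\mu}\pm\overline{E_{\mu}}\in\mathfrak{g}$, and extract $d\theta$-eigencomponents; this yields the same structure.)

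The only genuinely substantive point is the $H$-independence of the splitting $V_{\lambda}^{\mathbb{C}}=V_{\lambda}^{+}\oplus V_{\lambda}^{-}$, i.e.\ that $\operatorname{ad}(\mathfrak{a})$ acts on $V_{\lambda}$ through one fixed complex structure rather than a whole family of them; this is exactly where passing to \emph{restricted} roots is used, since it makes $\langle\mu,H\rangle$ depend only on $\overline{\mu}$ for $H\in\mathfrak{a}$. Everything else is bookkeeping with the reality structures $\mathfrak{g}\subset\mathfrak{g}^{\mathbb{C}}$ and $\mathfrak{k},\mathfrak{m}\subset\mathfrak{g}$ and with the maximality of $\mathfrak{a}$; the one hypothesis to keep in view is the bi-invariance of $\langle\cdot,\cdot\rangle$, without which $J$ need not be orthogonal and the $T_{\mu}$ need not be orthonormal.
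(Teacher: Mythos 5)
Your proof is correct. Note that the paper does not prove this lemma at all --- it is imported from Takeuchi \cite{T} --- so there is no in-paper argument to compare against; what you give is the standard derivation (simultaneous diagonalization of $\mathrm{ad}(\mathfrak{a})$ over $\mathbb{C}$, descent to the real forms $V_{\lambda}$, and the complex structure $J=\langle\lambda,H_{0}\rangle^{-1}\mathrm{ad}(H_{0})|_{V_{\lambda}}$ anticommuting with $d\theta$), and every step checks out, including the observation that the arbitrary labelling of the orthonormal basis of $\mathfrak{k}_{\lambda}$ by $\{\mu\in\tilde{R}_{+}\mid\overline{\mu}=\lambda\}$ suffices because the stated bracket relations only involve $\langle\mu,H\rangle=\langle\overline{\mu},H\rangle$ for $H\in\mathfrak{a}$. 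Your caveat about needing $\mathrm{ad}(H)$ to be skew-symmetric for $H\in\mathfrak{a}$ (hence an $\mathrm{Ad}(G)$-invariant inner product, slightly more than the $\mathrm{Ad}(K)$- and $d\theta$-invariance the paper states) is well taken.
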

	For each $\lambda \in R_{+}$ we put $m(\lambda ) = \dim \mathfrak{m}_{\lambda}=\dim \mathfrak{k}_{\lambda} $. 
	$m(\lambda)$ is called the multiplicity of $\lambda$.
	We define a subset $D$ of $\mathfrak{a}$ by 
	$$D= \bigcup_{\lambda \in R_{+}} \{ H \in \mathfrak{a}\ | \ \langle \lambda , H\rangle =0 \}.$$
	Each connected component of $\mathfrak{a} \setminus D$ is called a Weyl chamber.
	We define the fundamental Weyl chamber $\mathcal{C}$ by 
	$$\mathcal{C}= \{H \in \mathfrak{a} \ | \ \langle \alpha , H \rangle >0 \ (\alpha \in F)\}.$$
	The closure of $\mathcal{C}$ is given by 
	$$\overline{\mathcal{C}}= \{ H \in \mathfrak{a} \ | \ \langle \alpha , H \rangle \geq 0 \ (\alpha \in F)\}.$$
	For each subset $\Delta \subset F$, we define a subset $\mathcal{C}^{\Delta } \subset \overline{\mathcal{C}}$ by 
	$$\mathcal{C}^{\Delta }= \{H \in \overline{\mathcal{C}} \ | \ \langle \alpha , H \rangle >0 \ (\alpha \in \Delta) ,\ \langle \beta , H \rangle =0 \ (\beta \in F \setminus \Delta )\}.$$
	Then we have the following lemma.
	\begin{lem}[\cite{HKT}]\label{lem:cdelta}
			(1)\ For $\Delta_{1} \subset F$\ 
			$$\overline{\mathcal{C}^{\Delta_{1}}} = \bigcup_{\Delta \subset \Delta_{1}} \mathcal{C}^{\Delta}$$
			is a disjoin union. In particular $\overline{\mathcal{C}} = \bigcup_{\Delta \subset F} \mathcal{C}^{\Delta}$ is a disjoint union. 
			
			(2)\ $\Delta_{1} \subset \Delta_{2}$ if and only if $ \mathcal{C}^{\Delta_{1}} \subset \overline{\mathcal{C}^{\Delta_{2}}}$, for $\Delta_{1}, \Delta_{2} \subset F$. 
	\end{lem}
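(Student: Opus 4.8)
The plan is to reduce both assertions to one clean description of $\overline{\mathcal{C}^{\Delta_1}}$, namely
$\overline{\mathcal{C}^{\Delta_1}} = \{H \in \overline{\mathcal{C}} \mid \langle \beta , H \rangle = 0\ (\beta \in F \setminus \Delta_1)\}$,
and to use repeatedly that the restricted fundamental system $F$ is a basis of $\mathfrak{a}$ (under the identification of $\mathfrak{a}$ with its dual via $\langle \cdot , \cdot \rangle$). The first consequence of this basis property is that every $\mathcal{C}^{\Delta}$ is nonempty: choosing $H_\Delta \in \mathfrak{a}$ with $\langle \alpha , H_\Delta \rangle = 1$ for $\alpha \in \Delta$ and $\langle \beta , H_\Delta \rangle = 0$ for $\beta \in F \setminus \Delta$ gives $H_\Delta \in \mathcal{C}^{\Delta}$. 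The second consequence is disjointness: for $H \in \overline{\mathcal{C}}$ put $\Delta(H) = \{\alpha \in F \mid \langle \alpha , H \rangle > 0\}$; then $H \in \mathcal{C}^{\Delta(H)}$, and conversely $H \in \mathcal{C}^{\Delta}$ forces $\Delta = \Delta(H)$. Hence each $H \in \overline{\mathcal{C}}$ lies in exactly one $\mathcal{C}^{\Delta}$, which already proves $\overline{\mathcal{C}} = \bigsqcup_{\Delta \subset F} \mathcal{C}^{\Delta}$ and that the pieces $\mathcal{C}^{\Delta}$ are pairwise disjoint.

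For part (1), I would first note that $Z_{\Delta_1} := \{H \in \overline{\mathcal{C}} \mid \langle \beta , H \rangle = 0\ (\beta \in F \setminus \Delta_1)\}$ is closed and contains $\mathcal{C}^{\Delta_1}$, so $\overline{\mathcal{C}^{\Delta_1}} \subset Z_{\Delta_1}$. For the reverse inclusion, take $H \in Z_{\Delta_1}$, fix any $H_1 \in \mathcal{C}^{\Delta_1}$, and check root by root that $H + t H_1 \in \mathcal{C}^{\Delta_1}$ for every $t > 0$: on $\alpha \in \Delta_1$ one has $\langle \alpha , H \rangle + t\langle \alpha , H_1 \rangle > 0$, while on $\beta \in F \setminus \Delta_1$ both terms vanish. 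Letting $t \to 0^{+}$ gives $H \in \overline{\mathcal{C}^{\Delta_1}}$, hence $\overline{\mathcal{C}^{\Delta_1}} = Z_{\Delta_1}$. Since for $H \in \overline{\mathcal{C}}$ the condition $H \in Z_{\Delta_1}$ is equivalent to $\Delta(H) \subset \Delta_1$, intersecting this description with the disjoint decomposition $\overline{\mathcal{C}} = \bigsqcup_{\Delta \subset F}\mathcal{C}^{\Delta}$ yields $\overline{\mathcal{C}^{\Delta_1}} = \bigsqcup_{\Delta \subset \Delta_1} \mathcal{C}^{\Delta}$. The ``in particular'' statement is the case $\Delta_1 = F$.

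Part (2) is then formal. If $\Delta_1 \subset \Delta_2$, then by (1) $\mathcal{C}^{\Delta_1} \subset \bigsqcup_{\Delta \subset \Delta_2}\mathcal{C}^{\Delta} = \overline{\mathcal{C}^{\Delta_2}}$. Conversely, if $\mathcal{C}^{\Delta_1} \subset \overline{\mathcal{C}^{\Delta_2}}$, pick $H \in \mathcal{C}^{\Delta_1}$ (nonempty by the first paragraph); then $H \in \overline{\mathcal{C}^{\Delta_2}} = \bigsqcup_{\Delta \subset \Delta_2}\mathcal{C}^{\Delta}$, so $H \in \mathcal{C}^{\Delta}$ for some $\Delta \subset \Delta_2$, and by disjointness $\Delta = \Delta_1$, hence $\Delta_1 \subset \Delta_2$.

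The computations here are all routine; the only point that needs genuine care is the nonemptiness of the $\mathcal{C}^{\Delta}$ together with the fact that $F$ is a basis of $\mathfrak{a}$, since both the ``$H + tH_1$'' argument (which must land exactly in $\mathcal{C}^{\Delta_1}$, not merely in $\overline{\mathcal{C}}$) and the converse direction of (2) rely on it. Everything else is bookkeeping with the defining sign conditions.
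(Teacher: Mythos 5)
Your proof is correct. The paper itself does not prove this lemma (it is quoted from \cite{HKT}), so there is nothing internal to compare against; your argument via the description $\overline{\mathcal{C}^{\Delta_1}}=\{H\in\overline{\mathcal{C}}\mid\langle\beta,H\rangle=0\ (\beta\in F\setminus\Delta_1)\}$, the map $H\mapsto\Delta(H)$, and the segment $H+tH_1$ is the standard one, and it matches the paper's own parametrization $\mathcal{C}^{\Delta}=\{\sum_{\alpha\in\Delta}x_\alpha H_\alpha\mid x_\alpha>0\}$. You are right to flag that everything hinges on $F$ being a basis of $\mathfrak{a}$, which holds here because $G$ is assumed compact semisimple, so the restricted roots span $\mathfrak{a}$.
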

	For each $\alpha \in F$ we define $H_{\alpha} \in \mathfrak{a}$ by 
	$$\langle H_{\alpha}, \beta \rangle = \delta_{\alpha \beta} \ (\beta \in F),$$
	where $\delta_{\alpha \beta}$ is Kronecker's delta.
	Then for $\Delta \subset F$ we have 
	$$\mathcal{C}^{\Delta} = \left\{ \left.\sum_{\alpha \in \Delta} x_{\alpha} H_{\alpha}\ \right| \ x_{\alpha} >0\right\}.$$

\section{Construction of retractions} 
	The notation of the preceding section will be preserved.
	The linear isotropy representation of a Riemannian symmetric space $G/K$ is called an $s$-representation. 
	The $s$-representation of $G/K$ on $T_{o}(G/K)$ and the adjoint representation $\mathrm{Ad}(K)$ on $\mathfrak{m}$ are equivalent. 
	Since an $s$-representation is an orthogonal representation, for a unit vector $H\in \mathfrak{m}$, the orbit $\mathrm{Ad}(K)H$ is a submanifold of the unit sphere $S \subset \mathfrak{m}$.
	Orbits of $s$-representations are called R-spaces.
	The orbit space of an $s$-representation is homeomorphic to $\overline{\mathcal{C}} $, 
	more precisely for any $X \in \mathfrak{m}$, there exists $k \in K$ and unique $H \in \overline{{\mathcal C}}$ such that $X=\mathrm{Ad}(k)H$.
	The decomposition of $\overline{\mathcal{C}}$ in Lemma \ref{lem:cdelta} is the decomposition of the orbit type.
	From the following theorem, we can see that for each orbit type, there exists a unique minimal orbit.
	\begin{thm}[\cite{HTST}]\label{thm:minimalorb}
	For any nonempty subset $\Delta \subset F$, there exists a unique $H \in S \cap \mathcal{C}^{\Delta}$ such that the linear isotropy orbit $\mathrm{Ad}(K)H$ is a minimal orbit of $S$.
	\end{thm}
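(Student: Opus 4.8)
The plan is to express minimality of the orbit $\mathrm{Ad}(K)H$ as the critical point equation of the orbital volume functional restricted to the slice $S\cap\mathcal{C}^{\Delta}$, to write that functional down explicitly by means of Lemma \ref{lem:decomp}, and then to obtain existence and uniqueness of a critical point from strict concavity. Fix a nonempty $\Delta\subset F$, put $\mathfrak{a}^{\Delta}=\sum_{\alpha\in\Delta}\mathbb{R}H_{\alpha}$, the linear span of $\mathcal{C}^{\Delta}$, and let $R_{+}^{\Delta}$ be the set of $\lambda\in R_{+}$ with $\langle\lambda,H\rangle\neq 0$ for $H\in\mathcal{C}^{\Delta}$ — equivalently, the positive restricted roots not supported on $F\setminus\Delta$; note $\Delta\subset R_{+}^{\Delta}$, and that for $H\in\mathcal{C}^{\Delta}$ the tangent space of $\mathrm{Ad}(K)H$ at $H$ is $\sum_{\lambda\in R_{+}^{\Delta}}\mathfrak{m}_{\lambda}$, of dimension $d:=\sum_{\lambda\in R_{+}^{\Delta}}m(\lambda)$, which is independent of $H$. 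Using Lemma \ref{lem:decomp}(2) I would compute that the differential of $kK_{H}\mapsto\mathrm{Ad}(k)H$ at the identity coset, namely $X\mapsto[X,H]$, carries the orthonormal basis $\{S_{\mu}\}$ of $\mathfrak{k}_{\lambda}$ ($\lambda\in R_{+}^{\Delta}$) to $\{-\langle\lambda,H\rangle T_{\mu}\}$; since $\mathrm{Ad}(K)$ acts by isometries on $\mathfrak{m}$, this gives the classical formula
\[
V(H):=\mathrm{Vol}\,(\mathrm{Ad}(K)H)=c_{\Delta}\prod_{\lambda\in R_{+}^{\Delta}}\langle\lambda,H\rangle^{m(\lambda)}\qquad(H\in\mathcal{C}^{\Delta}),
\]
with $c_{\Delta}>0$ depending only on $\Delta$; put $\psi(H)=\log\!\big(V(H)/c_{\Delta}\big)=\sum_{\lambda\in R_{+}^{\Delta}}m(\lambda)\log\langle\lambda,H\rangle$.

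Next I would establish the variational characterization: for $H\in S\cap\mathcal{C}^{\Delta}$, the orbit $\mathrm{Ad}(K)H$ is minimal in $S$ if and only if $H$ is a critical point of $V|_{S\cap\mathcal{C}^{\Delta}}$. Computing, again by Lemma \ref{lem:decomp}(2), the shape operator of $\mathrm{Ad}(K)H\subset\mathfrak{m}$ in a normal direction $A\in\mathfrak{a}$ — it acts on each $\mathfrak{m}_{\lambda}$ as $-(\langle\lambda,A\rangle/\langle\lambda,H\rangle)\,\mathrm{id}$ — yields $\langle\vec{H}^{\mathfrak{m}}(H),A\rangle=-\langle\mathrm{grad}\,\psi(H),A\rangle$, so the $\mathfrak{a}$-component of the mean curvature vector of the orbit in $\mathfrak{m}$ is $-\mathrm{grad}\,\psi(H)$. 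Reindexing the defining sum of $\mathrm{grad}\,\psi(H)$ by a Weyl reflection $s_{\beta}$ with $\beta\in F\setminus\Delta$ — which fixes $H$, permutes $R_{+}^{\Delta}$, and preserves multiplicities — shows $\langle\mathrm{grad}\,\psi(H),\beta\rangle=0$, hence $\mathrm{grad}\,\psi(H)\in\mathfrak{a}^{\Delta}$; and the components of $\vec{H}^{\mathfrak{m}}(H)$ along the $\mathfrak{m}_{\lambda}$ with $\langle\lambda,H\rangle=0$ vanish because $\vec{H}^{\mathfrak{m}}(H)$ is $\mathrm{Ad}(K_{H})$-invariant and the slice representation of $K_{H}$ has no nonzero invariant vector in those directions. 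Therefore $\vec{H}^{\mathfrak{m}}(H)=-\mathrm{grad}\,\psi(H)\in\mathfrak{a}^{\Delta}$; since the mean curvature of the orbit in $S$ is $\vec{H}^{S}=\vec{H}^{\mathfrak{m}}+dH$ on $S$, minimality is equivalent to $\mathrm{grad}\,\psi(H)=dH$. By Euler's identity $\langle\mathrm{grad}\,\psi(H),H\rangle=d$ this forces $\|H\|=1$, and — as $\mathrm{grad}\,\psi(H)\in\mathfrak{a}^{\Delta}$ and $T_{H}(S\cap\mathcal{C}^{\Delta})=\mathfrak{a}^{\Delta}\cap H^{\perp}$ — it is exactly the condition that $H$ be a critical point of $\psi|_{S\cap\mathcal{C}^{\Delta}}$, hence of $V|_{S\cap\mathcal{C}^{\Delta}}$.

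It then remains to show that $\psi|_{S\cap\mathcal{C}^{\Delta}}$ has exactly one critical point. The domain $S\cap\mathcal{C}^{\Delta}$ is a geodesically convex open subset of the unit sphere of $\mathfrak{a}^{\Delta}$: it is the intersection of the open convex cone $\mathcal{C}^{\Delta}=\{H\in\mathfrak{a}^{\Delta}\mid\langle\alpha,H\rangle>0\ (\alpha\in\Delta)\}$ with the sphere, and this cone contains no antipodal pair. Along a unit-speed geodesic $H(s)$ in the sphere one has $\langle\lambda,H(s)\rangle=r_{\lambda}\cos(s-\phi_{\lambda})$, whence
\[
\frac{d^{2}}{ds^{2}}\log\langle\lambda,H(s)\rangle=-1-\left(\frac{d}{ds}\log\langle\lambda,H(s)\rangle\right)^{2}<0,
\]
so each summand, and hence $\psi$, is strictly geodesically concave on $S\cap\mathcal{C}^{\Delta}$; moreover $\psi\to-\infty$ at the relative boundary, where $\langle\alpha,\cdot\rangle\to 0$ for some $\alpha\in\Delta\subset R_{+}^{\Delta}$. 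A proper, strictly geodesically concave function on a geodesically convex domain attains its maximum at a single interior point and has no other critical point; by the previous step that point is the unique $H\in S\cap\mathcal{C}^{\Delta}$ for which $\mathrm{Ad}(K)H$ is a minimal orbit, which is the assertion.

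The hard part is the variational characterization, and within it the one non-computational ingredient: that the $\mathrm{Ad}(K_{H})$-equivariant mean curvature vector of $\mathrm{Ad}(K)H$ has no component along the normal directions $\mathfrak{m}_{\lambda}$ with $\langle\lambda,H\rangle=0$. I would obtain this by noting that this mean curvature vector is a fixed vector of the slice representation of $K_{H}$ on the normal space $\mathfrak{p}^{H}:=\mathfrak{g}^{H}\cap\mathfrak{m}$ (here $\mathfrak{g}^{H}$ is the centralizer of $H$ in $\mathfrak{g}$), which is the $s$-representation of the compact symmetric pair $(\mathfrak{g}^{H},\mathfrak{g}^{H}\cap\mathfrak{k})$; its fixed vectors are contained in $Z(\mathfrak{g}^{H})\cap\mathfrak{m}$, and a computation with Lemma \ref{lem:decomp} shows $Z(\mathfrak{g}^{H})\cap\mathfrak{m}=\mathfrak{a}^{\Delta}$. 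Everything else is either a direct computation with Lemma \ref{lem:decomp} or elementary convexity on the sphere.
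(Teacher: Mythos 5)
Your proof is correct, but note that the paper does not actually prove this statement: it is imported from \cite{HTST}, and the only indication given of its proof is that the Kitagawa--Ohnita mean curvature formula $m_{H}=-\sum_{\lambda\in\tilde{R}_{+}\setminus\tilde{R}_{+}^{\Delta}}\overline{\lambda}/\langle\lambda,H\rangle$ enters it. Your reconstruction is, as far as one can tell, exactly the route of \cite{HTST}: the orbit volume on $\mathcal{C}^{\Delta}$ is a constant multiple of $\prod_{\lambda}\langle\lambda,H\rangle^{m(\lambda)}$, minimality in $S$ is equivalent to criticality of this function on $S\cap\mathcal{C}^{\Delta}$, and existence and uniqueness of the critical point follow from $\psi=\sum m(\lambda)\log\langle\lambda,H\rangle$ being strictly geodesically concave and tending to $-\infty$ at the relative boundary of the geodesically convex domain $S\cap\mathcal{C}^{\Delta}$; all the individual computations (the Jacobian of $kK_{H}\mapsto\mathrm{Ad}(k)H$ via Lemma \ref{lem:decomp}, the identity $\langle\mathrm{grad}\,\psi(H),H\rangle=\dim\mathrm{Ad}(K)H$, the second derivative $-1-(\log\langle\lambda,H(s)\rangle)'^{2}$ along great circles) check out. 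Two remarks. First, the one genuinely non-routine step --- that the mean curvature vector of $\mathrm{Ad}(K)H$ in $\mathfrak{m}$ has no component in the singular normal directions $\mathfrak{m}_{\lambda}$ with $\langle\lambda,H\rangle=0$, so that it equals $-\mathrm{grad}\,\psi(H)\in\mathfrak{a}^{\Delta}$ --- you establish by a slice-representation/centralizer argument; that argument is sound, but it is precisely the content of the quoted formula of \cite{KO}, which asserts $m_{H}\in\mathfrak{a}$, so in the context of this paper you could simply invoke it. Second, be aware that your $R_{+}^{\Delta}$ (positive restricted roots \emph{not} vanishing on $\mathcal{C}^{\Delta}$) is the complement of the paper's $R_{+}^{\Delta_{0}}=\{\lambda\in R_{+}\mid\langle\lambda,A\rangle=0\}$; you define your convention explicitly, so this is only a notational hazard, not an error.
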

	\begin{cor}
	An isolated orbit (i.e. $\Delta = \{\alpha\}$) is a minimal submanifold of $S$.
	\end{cor}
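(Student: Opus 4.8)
The plan is to deduce the corollary immediately from Theorem \ref{thm:minimalorb}, using the explicit description of the cells $\mathcal{C}^{\Delta}$ recorded at the end of Section \ref{Preliminaries}. First I would specialize that description to a one-element subset: for $\Delta=\{\alpha\}$ with $\alpha\in F$ we have
$$\mathcal{C}^{\{\alpha\}}=\{\,x_{\alpha}H_{\alpha}\mid x_{\alpha}>0\,\},$$
the open ray in $\mathfrak{a}\subset\mathfrak{m}$ spanned by the nonzero vector $H_{\alpha}$.

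Next I would intersect this ray with the unit sphere $S\subset\mathfrak{m}$. Since $H_{\alpha}\neq 0$, the ray $\{x_{\alpha}H_{\alpha}\mid x_{\alpha}>0\}$ meets $S$ in exactly one point, namely $H_{\alpha}/\|H_{\alpha}\|$. Hence $S\cap\mathcal{C}^{\{\alpha\}}$ is a single point. Applying Theorem \ref{thm:minimalorb} with $\Delta=\{\alpha\}$ then gives an $H\in S\cap\mathcal{C}^{\{\alpha\}}$ whose linear isotropy orbit $\mathrm{Ad}(K)H$ is a minimal orbit of $S$; but $S\cap\mathcal{C}^{\{\alpha\}}=\{H_{\alpha}/\|H_{\alpha}\|\}$, so necessarily $H=H_{\alpha}/\|H_{\alpha}\|$, and this orbit is minimal. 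Since every isolated orbit is, up to the $\mathrm{Ad}(K)$-action, the orbit through a point of some $\mathcal{C}^{\{\alpha\}}$ (any $X\in\mathfrak{m}$ equals $\mathrm{Ad}(k)H$ for a unique $H\in\overline{\mathcal{C}}$, and $X$ lies on an isolated orbit precisely when this $H$ belongs to $\mathcal{C}^{\{\alpha\}}$ for some $\alpha\in F$), this establishes the claim.

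There is essentially no obstacle: the corollary is a direct consequence of Theorem \ref{thm:minimalorb}, the only point requiring any verification being that $S\cap\mathcal{C}^{\{\alpha\}}$ is nonempty and reduces to a single point, which is immediate from the ray description of $\mathcal{C}^{\{\alpha\}}$. In other words, the substantive input is the existence half of Theorem \ref{thm:minimalorb}, and the uniqueness half is here automatic.
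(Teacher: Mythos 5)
Your argument is correct and is exactly the deduction the paper intends (the corollary is stated without proof as an immediate consequence of Theorem \ref{thm:minimalorb}): since $\mathcal{C}^{\{\alpha\}}$ is the open ray through $H_{\alpha}$, its intersection with $S$ is a single point, so the existence part of the theorem forces the orbit through that point to be minimal. Nothing is missing.
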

	Kitagawa and Ohnita  (\cite{KO}) calculated the mean curvature vector $m_{H}$ of $\mathrm{Ad}(K)H$ in $\mathfrak{m}$ at $H$:
	$$m_{H}= - \sum_{\lambda \in \tilde{R}_{+}\setminus \tilde{R}_{+}^{\Delta}} \frac{\overline{\lambda}}{\langle \lambda , H\rangle}.$$
	This expression is used in the proof of Theorem \ref{thm:minimalorb}.
	We consider cones over minimal embeddings of R-spaces that obtained in this way, and construct retractions.
	\begin{lem}[\cite{HKT}]\label{Lem:3}
	Suppose $\phi$ is a mapping of \ $\overline{\mathcal{C}}$ into itself such that $\phi (\mathcal{C}^{\Delta}) \subset \overline{\mathcal{C}^{\Delta}}$ for each $\Delta \subset F$.
	Then $\phi$ extends to a mapping $\Phi$ of $\mathfrak{m}$ as 
	$$\Phi (X) =\mathrm{Ad}(k)\phi(H)$$
	for each $X=\mathrm{Ad}(k)H\ (k \in K, H \in \overline{\mathcal{C}})$.
	\end{lem}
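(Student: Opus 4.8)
The statement to be proved is that $\Phi$ is \emph{well defined}: one must check that $\mathrm{Ad}(k)\phi(H)$ does not depend on the way $X$ is written as $\mathrm{Ad}(k)H$ with $k\in K$, $H\in\overline{\mathcal{C}}$. As recalled above, the element $H\in\overline{\mathcal{C}}$ is \emph{uniquely} determined by $X$, so the only freedom is to replace $k$ by $k\ell$ with $\ell$ in the isotropy subgroup $K_{H}=\{\ell\in K\mid\mathrm{Ad}(\ell)H=H\}$. Thus everything reduces to showing $\mathrm{Ad}(\ell)\phi(H)=\phi(H)$ for every $\ell\in K_{H}$. Let $\Delta\subset F$ be the unique subset with $H\in\mathcal{C}^{\Delta}$ (Lemma \ref{lem:cdelta}); since $\phi(H)\in\overline{\mathcal{C}^{\Delta}}$ by hypothesis, it suffices to prove the claim: \emph{if $\mathrm{Ad}(\ell)H=H$ for some $\ell\in K$ and $H\in\mathcal{C}^{\Delta}$, then $\mathrm{Ad}(\ell)$ fixes every point of $\overline{\mathcal{C}^{\Delta}}$.}

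To prove this I would introduce $V_{\Delta}=\mathrm{span}_{\mathbb{R}}\{H_{\alpha}\mid\alpha\in\Delta\}$. By the last displayed formula of Section \ref{Preliminaries} we have $\overline{\mathcal{C}^{\Delta}}\subset V_{\Delta}$; moreover, since $\{H_{\gamma}\}_{\gamma\in F}$ is the basis of $\mathfrak{a}$ dual to $F$, one gets $V_{\Delta}=\{Z\in\mathfrak{a}\mid\langle\beta,Z\rangle=0\ (\beta\in F\setminus\Delta)\}$ and $\overline{\mathcal{C}^{\Delta}}=V_{\Delta}\cap\overline{\mathcal{C}}$. The first key point is that $\mathrm{Ad}(\ell)$ preserves $V_{\Delta}$. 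Indeed, since $\mathrm{Ad}(\ell)$ fixes $H$ it preserves the centralizer $\mathfrak{z}=\{Y\in\mathfrak{g}\mid[Y,H]=0\}$, hence the centre of $\mathfrak{z}$, hence (the centre of $\mathfrak{z}$)$\,\cap\,\mathfrak{m}$ because $\mathrm{Ad}(\ell)$ preserves $\mathfrak{m}$ ($\ell\in K$). It then remains to identify this intersection with $V_{\Delta}$: from Lemma \ref{lem:decomp} one reads off $\mathfrak{z}=\mathfrak{k}_{0}+\mathfrak{a}+\sum_{\lambda}(\mathfrak{k}_{\lambda}+\mathfrak{m}_{\lambda})$, the sum over $\lambda\in R_{+}$ with $\langle\lambda,H\rangle=0$; since positive roots are nonnegative combinations of $F$ and $\langle\alpha,H\rangle>0$ exactly for $\alpha\in\Delta$, these $\lambda$ are precisely the positive roots supported on $F\setminus\Delta$, and they span $\mathrm{span}(F\setminus\Delta)$. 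Using the bracket relations in Lemma \ref{lem:decomp}(2) one checks that $Z\in\mathfrak{m}$ lies in the centre of $\mathfrak{z}$ if and only if $Z\in\mathfrak{a}$ and $\langle\beta,Z\rangle=0$ for all $\beta\in F\setminus\Delta$, i.e. if and only if $Z\in V_{\Delta}$.

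Granting the invariance $\mathrm{Ad}(\ell)V_{\Delta}=V_{\Delta}$, fix $H'\in\overline{\mathcal{C}^{\Delta}}$ and set $H_{t}=(1-t)H+tH'$. A short check on the values $\langle\alpha,H_{t}\rangle$ shows $H_{t}\in\mathcal{C}^{\Delta}\subset\overline{\mathcal{C}}$ for $0<t<1$, while $\mathrm{Ad}(\ell)H_{t}=(1-t)H+t\,\mathrm{Ad}(\ell)H'\in V_{\Delta}$. Because $\langle\alpha,H\rangle>0$ for every $\alpha\in\Delta$, whereas $|\langle\alpha,\mathrm{Ad}(\ell)H'\rangle|\le\|\alpha\|\,\|\mathrm{Ad}(\ell)H'\|=\|\alpha\|\,\|H'\|$ by the $\mathrm{Ad}(K)$-invariance of $\langle\cdot,\cdot\rangle$, there is $t_{0}\in(0,1)$ with $\langle\alpha,\mathrm{Ad}(\ell)H_{t}\rangle\ge 0$ for all $\alpha\in\Delta$ whenever $0<t<t_{0}$; combined with $\mathrm{Ad}(\ell)H_{t}\in V_{\Delta}$ this gives $\mathrm{Ad}(\ell)H_{t}\in V_{\Delta}\cap\overline{\mathcal{C}}=\overline{\mathcal{C}^{\Delta}}\subset\overline{\mathcal{C}}$. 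Now $\mathrm{Ad}(\ell)H_{t}$ and $H_{t}$ lie in the same $\mathrm{Ad}(K)$-orbit and both lie in $\overline{\mathcal{C}}$, so the uniqueness of the fundamental-chamber representative forces $\mathrm{Ad}(\ell)H_{t}=H_{t}$; comparing the two expressions for $H_{t}$ and dividing by $t>0$ yields $\mathrm{Ad}(\ell)H'=H'$. This proves the claim, hence the well-definedness of $\Phi$; that $\Phi$ restricts to $\phi$ on $\overline{\mathcal{C}}$ is clear on taking $k=e$.

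I expect the main obstacle to be the invariance $\mathrm{Ad}(\ell)V_{\Delta}=V_{\Delta}$, and within it the identification of $V_{\Delta}$ with the $\mathfrak{m}$-part of the centre of $\mathfrak{z}_{\mathfrak{g}}(H)$: one must be careful that the restricted roots vanishing on $H$ span exactly $\mathrm{span}(F\setminus\Delta)$, so that ``centralizing $\mathfrak{z}_{\mathfrak{g}}(H)$'' translates into the $|\Delta|$ linear conditions cutting out $V_{\Delta}$. Everything else is a short convexity-and-continuity argument combined with uniqueness of chamber representatives. (An alternative to this centralizer computation is to write $\ell=\ell'n$ with $\ell'$ in the identity component of $K_{H}$ and $n\in N_{K}(\mathfrak{a})$, note from Lemma \ref{lem:decomp} that $\mathrm{Lie}(K_{H})=\mathfrak{k}_{0}+\sum_{\langle\lambda,H\rangle=0}\mathfrak{k}_{\lambda}$ acts trivially on $V_{\Delta}$, and use that the Weyl-group stabilizer of $H\in\mathcal{C}^{\Delta}$ is generated by the reflections $s_{\beta}$, $\beta\in F\setminus\Delta$, each of which fixes every $H_{\alpha}$ with $\alpha\in\Delta$; but the direct argument above seems shorter.)
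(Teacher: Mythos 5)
The paper gives no proof of this lemma at all: it is imported wholesale from \cite{HKT}, and the one ingredient your argument really needs --- that the stabilizer $Z_K^{H}=\{\ell\in K\mid\mathrm{Ad}(\ell)H=H\}$ fixes $\overline{\mathcal{C}^{\Delta}}$ pointwise, i.e.\ $Z_K^{H}=Z_K^{\Delta}$ --- is likewise only quoted from \cite{HKT} later in the text. So there is no in-paper proof to compare against; what you have written is a correct, self-contained substitute. Your reduction to well-definedness is the right one (uniqueness of the chamber representative leaves only the ambiguity $k\mapsto k\ell$, $\ell\in Z_K^H$, and the hypothesis $\phi(\mathcal{C}^{\Delta})\subset\overline{\mathcal{C}^{\Delta}}$ is exactly what makes the claim about $Z_K^H$ suffice). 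The two nontrivial steps both check out against the paper's own lemmas: the identification $\mathfrak{z}_{\mathfrak{g}}(H)=\mathfrak{k}_0+\mathfrak{a}+\sum_{\langle\lambda,H\rangle=0}(\mathfrak{k}_\lambda+\mathfrak{m}_\lambda)$ and the computation of its centre intersected with $\mathfrak{m}$ follow directly from the bracket relations in Lemma \ref{lem:decomp}(2) (the roots vanishing on $H\in\mathcal{C}^{\Delta}$ are precisely those supported on $F\setminus\Delta$, which include $F\setminus\Delta$ itself, so the centre cuts out exactly $V_{\Delta}$); and the convexity-plus-uniqueness argument correctly upgrades ``$\mathrm{Ad}(\ell)$ preserves $V_{\Delta}$'' to ``$\mathrm{Ad}(\ell)$ fixes $\overline{\mathcal{C}^{\Delta}}$ pointwise'' (the only point to watch, that $\langle\beta,\mathrm{Ad}(\ell)H_t\rangle=0$ for $\beta\in F\setminus\Delta$, is supplied by $\mathrm{Ad}(\ell)H_t\in V_{\Delta}$). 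Compared with the standard route via the Weyl group and $N_K(\mathfrak{a})$ (your parenthetical alternative), your direct centralizer argument avoids the nontrivial generation statement for $Z_K^H$ and is the cleaner choice here. No gaps.
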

	The following theorem is a generalization of Proposition 2.6 in \cite{HKT}.
	\begin{thm}\label{thm:retr}
	For $A\in \overline{\mathcal{C}}$, we put $\Delta_{0}=\{\alpha \in F \mid \langle \alpha , A \rangle >0\}$.
	Let $f: \overline{\mathcal{C}} \to \mathbb{R}_{\geq 0}$ be a continuous function. 
	Define a continuous mapping $\phi:\overline{\mathcal{C}} \to \{tA \mid t \geq 0\}$ by $\phi (x)=f(x)A$. If $f$ satisfies 
\begin{enumerate}
	\item[(1)] $f(tA)=t \ (t\geq 0)$,
	\item[(2)] $f|_{\mathcal{C}^{\Delta}}=0\ (\Delta \subset F \ \text{with}\  \Delta_{0} \not\subset \Delta)$,
\end{enumerate}
	then $\phi$ extends to a retraction $\Phi : \mathfrak{m} \to C_{\mathrm{Ad}(K)A}$.
	\end{thm}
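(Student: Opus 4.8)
The plan is to obtain $\Phi$ by applying Lemma~\ref{Lem:3} to $\phi$, and then to check that the resulting map is a continuous retraction of $\mathfrak{m}$ onto $C_{\mathrm{Ad}(K)A}$, using hypotheses (1) and (2). First observe that $A\in\mathcal{C}^{\Delta_{0}}$, directly from the definitions of $\Delta_{0}$ and of $\mathcal{C}^{\Delta_{0}}$, and that each $\overline{\mathcal{C}^{\Delta}}=\{\sum_{\alpha\in\Delta}x_{\alpha}H_{\alpha}\mid x_{\alpha}\geq 0\}$ is a closed cone. I would then verify the hypothesis of Lemma~\ref{Lem:3}, that is, $\phi(\mathcal{C}^{\Delta})\subset\overline{\mathcal{C}^{\Delta}}$ for every $\Delta\subset F$, by splitting into two cases. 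If $\Delta_{0}\subset\Delta$, then $A\in\mathcal{C}^{\Delta_{0}}\subset\overline{\mathcal{C}^{\Delta}}$ by Lemma~\ref{lem:cdelta}(2), so $\phi(x)=f(x)A\in\overline{\mathcal{C}^{\Delta}}$ since $f(x)\geq 0$. If $\Delta_{0}\not\subset\Delta$, then hypothesis (2) gives $f|_{\mathcal{C}^{\Delta}}\equiv 0$, so $\phi(\mathcal{C}^{\Delta})=\{0\}\subset\overline{\mathcal{C}^{\Delta}}$. (Also $\phi(\overline{\mathcal{C}})\subset\overline{\mathcal{C}}$ since $A\in\overline{\mathcal{C}}$ and $\overline{\mathcal{C}}$ is a cone, so Lemma~\ref{Lem:3} does apply.) This produces a well-defined map $\Phi:\mathfrak{m}\to\mathfrak{m}$ with $\Phi(\mathrm{Ad}(k)H)=\mathrm{Ad}(k)\phi(H)=f(H)\,\mathrm{Ad}(k)A$ for $k\in K$ and $H\in\overline{\mathcal{C}}$.

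It is then immediate that $\Phi(\mathfrak{m})\subset C_{\mathrm{Ad}(K)A}$, since each value $f(H)\,\mathrm{Ad}(k)A$ is a nonnegative multiple of a point of the orbit $\mathrm{Ad}(K)A$. For the retraction property I would write an arbitrary $Y\in C_{\mathrm{Ad}(K)A}$ as $Y=\mathrm{Ad}(k)(tA)$ with $k\in K$ and $t\geq 0$; since $tA\in\overline{\mathcal{C}}$, the definition of $\Phi$ together with hypothesis (1) gives $\Phi(Y)=\mathrm{Ad}(k)\phi(tA)=\mathrm{Ad}(k)\bigl(f(tA)A\bigr)=\mathrm{Ad}(k)(tA)=Y$. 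Hence $\Phi|_{C_{\mathrm{Ad}(K)A}}=\mathrm{id}$, and with the previous inclusion $\Phi$ maps onto $C_{\mathrm{Ad}(K)A}$.

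The remaining and most delicate point is the continuity of $\Phi$ on all of $\mathfrak{m}$, since Lemma~\ref{Lem:3} only yields $\Phi$ as a map; here I would use the compactness of $K$. Given $X_{n}\to X$ in $\mathfrak{m}$, write $X_{n}=\mathrm{Ad}(k_{n})H_{n}$ and $X=\mathrm{Ad}(k)H$ with $H_{n},H\in\overline{\mathcal{C}}$. Passing to a subsequence we may assume $k_{n}\to k_{\infty}\in K$; then $H_{n}=\mathrm{Ad}(k_{n}^{-1})X_{n}\to\mathrm{Ad}(k_{\infty}^{-1})X$, which lies in the closed set $\overline{\mathcal{C}}$ and represents the orbit of $X$, hence equals $H$ by uniqueness of the $\overline{\mathcal{C}}$-representative. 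Continuity of $f$ and of $\mathrm{Ad}$ then gives $\Phi(X_{n})=f(H_{n})\,\mathrm{Ad}(k_{n})A\to f(H)\,\mathrm{Ad}(k_{\infty})A=\mathrm{Ad}(k_{\infty})\phi(H)=\Phi(X)$, the last equality by well-definedness of $\Phi$. As every subsequence admits a further subsequence along which $\Phi(X_{n})\to\Phi(X)$, the whole sequence converges, so $\Phi$ is continuous. I expect this last step, the passage from the orbit space $\overline{\mathcal{C}}$ back to $\mathfrak{m}$, to be the main obstacle, although compactness of $K$ renders it routine.
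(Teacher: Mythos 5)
Your proposal is correct and follows the same skeleton as the paper's proof: verify the hypothesis of Lemma~\ref{Lem:3} by the two cases $\Delta_{0}\subset\Delta$ (using Lemma~\ref{lem:cdelta}(2)) and $\Delta_{0}\not\subset\Delta$ (using hypothesis (2)), then establish surjectivity onto $C_{\mathrm{Ad}(K)A}$ and continuity. The only real divergence is in the continuity step: the paper fixes the limit point, sets $\Delta_{\infty}=\{\alpha\in F\mid\langle\alpha,H_{\infty}\rangle>0\}$, and uses the identity $Z_{K}^{H_{\infty}}=Z_{K}^{\Delta_{\infty}}$ from \cite{HKT} to show that every accumulation point $\tilde{k}$ of $\{k_{n}\}$ satisfies $\mathrm{Ad}(\tilde{k})|_{\overline{\mathcal{C}^{\Delta_{\infty}}}}=\mathrm{Ad}(k_{\infty})|_{\overline{\mathcal{C}^{\Delta_{\infty}}}}$, whereas you pass to a convergent subsequence of $\{k_{n}\}$ and invoke the well-definedness of $\Phi$ furnished by Lemma~\ref{Lem:3} to identify $\mathrm{Ad}(k_{\infty})\phi(H)$ with $\Phi(X)$; these are the same fact in different packaging, since the well-definedness in Lemma~\ref{Lem:3} is precisely the statement that the stabilizer of $H$ fixes $\phi(H)\in\overline{\mathcal{C}^{\Delta}}$. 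Your version is slightly cleaner in that it also records explicitly that $\Phi|_{C_{\mathrm{Ad}(K)A}}=\mathrm{id}$ via hypothesis (1), which the paper leaves implicit in its computation of $\Phi(\mathfrak{m})$.
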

	\begin{proof}
	First, we show that $\phi$ satisfies the assumption of Lemma \ref{Lem:3}.
	For $\Delta \subset F$ if $\Delta_{0} \subset \Delta$, then $\mathcal{C}^{\Delta_{0}} \subset \overline{\mathcal{C}^{\Delta}}$.
	Hence
	$$\phi (\mathcal{C}^{\Delta}) =\{tA \mid t\geq 0\} \subset \mathcal{C}^{\Delta_{0}} \subset \overline{\mathcal{C}^{\Delta}}$$
	holds.
	If $\Delta_{0} \not\subset \Delta$, then $\phi (\mathcal{C}^{\Delta})=\{0\}$ since $f|_{\mathcal{C}^{\Delta}}=0$.
	Therefore, $\phi$ satisfies the assumption of Lemma \ref{Lem:3}.
	We also get 
	\begin{eqnarray*}
	\Phi(\mathfrak{m})&=& \{\mathrm{Ad}(k)f(H)A \mid k\in K , H\in \overline{\mathcal{C}}\}\\
					  &=& \{t \mathrm{Ad}(k)A \mid k \in K,\ t\geq 0 \} =C_{\mathrm{Ad}(K)A}.
	\end{eqnarray*}
	Thus $\Phi$ is a surjection from $\mathfrak{m}$ onto $C_{\mathrm{Ad}(K)A}$.
	Next we show that $\Phi$ is continuous.
	Let $\{P_{n}\}_{n \in \mathbb{N}}$ be a sequence in $\mathfrak{m}$ with limit $P_{\infty}\in \mathfrak{m}$.
	Points $P_{n}$ and $P_{\infty}$ can be expressed as 
	$P_{n}=\mathrm{Ad}(k_{n})H_{n},\ P_{\infty}=\mathrm{Ad}(k_{\infty})H_{\infty}$ where $k_{n}, k_{\infty} \in K$ and $H_{n}, H_{\infty} \in \overline{\mathcal{C}}$.
	Since the projection $ \mathfrak{m} \to \overline{\mathcal{C}}; X= \mathrm{Ad}(k)H \mapsto H$ is continuous, we have 
	$\lim_{n\to \infty}H_{n}=H_{\infty}$. 
	We put $\Delta_{\infty}=\{\alpha \in F \mid \langle \alpha , H_{\infty} \rangle >0 \}$, 
	$Z_{K}^{H_{\infty}}=\{k \in K \mid \mathrm{Ad}(k)H_{\infty} =H_{\infty}\}$ and 
	$Z_{K}^{\Delta_{\infty}}=\{k \in K \mid \mathrm{Ad}(k)|_{\overline{\mathcal{C}^{\Delta_{\infty}}}} =\mathrm{id} \}$.
	Since $Z_{K}^{H_{\infty}}=Z_{K}^{\Delta_{\infty}}$ (\cite{HKT}), 
	for any accumulation point $\tilde{k} \in K$ of $\{k_{n} \}_{n \in \mathbb{N}}$, 	
	$\mathrm{Ad}(\tilde{k})|_{\overline{\mathcal{C}^{\Delta_{\infty}}}}=\mathrm{Ad}(k_{\infty})|_{\overline{\mathcal{C}^{\Delta_{\infty}}}}$.
	Thus, we have $\lim_{n \to \infty} \mathrm{Ad}(k_{n})|_{\overline{\mathcal{C}^{\Delta_{\infty}}}}=\mathrm{Ad}(k_{\infty})|_{\overline{\mathcal{C}^{\Delta_{\infty}}}} $.
	Therefore 
	$$\lim_{n \to \infty} \Phi(P_{n})=\lim_{n \to \infty}\mathrm{Ad}(k_{n})f(H_{n})A=\mathrm{Ad}(k_{\infty})f(H_{\infty})A=\Phi(P_{\infty})$$
	Hence $\Phi$ is a retraction from $\mathfrak{m}$ onto $C_{\mathrm{Ad}(K)A}$.
	\end{proof}

	\begin{prop}\label{C1}
	Let $\Phi : \mathfrak{m} \to C_{\mathrm{Ad}(K)A}$ be a retraction which constructed by Theorem \ref{thm:retr}.
	If $\Phi|_{\mathfrak{a}\setminus \Phi^{-1}(\{0\})}$ is $C^{1}$, then $\Phi|_{\mathfrak{m}\setminus \Phi^{-1}(\{0\})}$ is $C^{1}$.
	In this case $\Phi$ is area-nonincreasing  if and only if $J(d\Phi)_{x} \leq 1$ holds for each $x \in \mathcal{C}\setminus \Phi^{-1}(\{0\})$.
	\end{prop}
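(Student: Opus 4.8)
The plan is to exploit the $\mathrm{Ad}(K)$-equivariance of $\Phi$. By the construction in Lemma~\ref{Lem:3}, $\Phi(\mathrm{Ad}(k)X)=\mathrm{Ad}(k)\Phi(X)$ for all $k\in K$ and $X\in\mathfrak{m}$, so $\Phi^{-1}(\{0\})$ is $\mathrm{Ad}(K)$-invariant, $J(d\Phi)_{\mathrm{Ad}(k)X}=J(d\Phi)_{X}$ wherever $\Phi$ is differentiable (each $\mathrm{Ad}(k)$ being a linear isometry of $\mathfrak{m}$), and $\Phi$ is $C^{1}$ near $X$ if and only if it is $C^{1}$ near $\mathrm{Ad}(k)X$. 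Condition~(1) of Theorem~\ref{thm:retr} with $t=0$ gives $\Phi(0)=0$, so $0\in\Phi^{-1}(\{0\})$. Since every orbit meets $\overline{\mathcal{C}}$, it suffices to prove that $\Phi$ is $C^{1}$ near each $H_{0}\in\overline{\mathcal{C}}\setminus\Phi^{-1}(\{0\})$, and that $J(d\Phi)_{H}\le1$ for all $H\in\overline{\mathcal{C}}\setminus\Phi^{-1}(\{0\})$ follows once this is known for all $H\in\mathcal{C}\setminus\Phi^{-1}(\{0\})$: the first gives the $C^{1}$-assertion of the proposition, and then — using $J(d\Phi)_{\mathrm{Ad}(k)X}=J(d\Phi)_{X}$ — the stated equivalence reduces to the second (the forward implication being trivial).

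Fix $H_{0}\in\overline{\mathcal{C}}\setminus\Phi^{-1}(\{0\})$, say $H_{0}\in\mathcal{C}^{\Delta}$; then $\Delta_{0}\subset\Delta$ (otherwise condition~(2) forces $\Phi(H_{0})=0$) and $H_{0}\neq0$. By Lemma~\ref{lem:decomp} the tangent space of the orbit is $[\mathfrak{k},H_{0}]=\sum_{\langle\lambda,H_{0}\rangle\neq0}\mathfrak{m}_{\lambda}$, so its normal space at $H_{0}$ is
$$V:=\{X\in\mathfrak{m}\mid[X,H_{0}]=0\}=\mathfrak{a}+\sum_{\lambda\in R_{+},\ \langle\lambda,H_{0}\rangle=0}\mathfrak{m}_{\lambda},\qquad\dim V<\dim\mathfrak{m}.$$
By the slice theorem, a neighbourhood of $H_{0}$ in $\mathfrak{m}$ is $\mathrm{Ad}(K)$-equivariantly diffeomorphic to $K\times_{K_{H_{0}}}V_{\varepsilon}$ ($V_{\varepsilon}$ a small ball in $V$) via $[k,v]\mapsto\mathrm{Ad}(k)(H_{0}+v)$, under which $\Phi$ becomes $[k,v]\mapsto\mathrm{Ad}(k)\Phi(H_{0}+v)$; since the bundle projection $K\times V_{\varepsilon}\to K\times_{K_{H_{0}}}V_{\varepsilon}$ is a submersion, $\Phi$ is $C^{1}$ near $H_{0}$ if and only if $\psi:v\mapsto\Phi(H_{0}+v)$ is $C^{1}$ on $V_{\varepsilon}$. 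Now $K_{H_{0}}=Z_{K}^{\Delta}$ acts trivially on $\overline{\mathcal{C}^{\Delta}}\ni A$, hence fixes $A$; so $\psi$ is $K_{H_{0}}$-invariant with values in $\mathbb{R}_{\geq0}A$, the $K_{H_{0}}$-action on $V$ is again polar with section $\mathfrak{a}$ (it is the $s$-representation of $Z_{G}(H_{0})/K_{H_{0}}$ up to a trivial summand), and $\psi|_{\mathfrak{a}}$ is, near $0$, the composition of $\Phi|_{\mathfrak{a}}$ with the translation $v\mapsto H_{0}+v$, hence $C^{1}$. Away from $0\in V$ the same reduction applies with strictly smaller slices, so, inducting on $\dim\mathfrak{m}$, everything reduces to the $C^{1}$-regularity of $\psi$ at $0$.

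This point — equivalently, $C^{1}$-regularity of $\Phi$ along the singular strata — is the main obstacle, since the slice theorem gives no reduction at the fixed point $0\in V$. Because $\psi$ is scalar-valued and $K_{H_{0}}$-invariant, $\psi|_{\mathfrak{a}}$ is invariant under the finite reflection group $W_{H_{0}}=N_{K_{H_{0}}}(\mathfrak{a})/Z_{K_{H_{0}}}(\mathfrak{a})$; differentiating this invariance at $0$ shows that the covector $d(\psi|_{\mathfrak{a}})_{0}$ is $W_{H_{0}}$-fixed, hence vanishes on $\mathfrak{a}\cap(\mathfrak{a}^{W_{H_{0}}})^{\perp}$. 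Writing $\psi(v)=\psi|_{\mathfrak{a}}(\rho(v))$ with $\rho(v)\in\mathfrak{a}$ a representative of the $\mathrm{Ad}(K_{H_{0}})$-orbit of $v$ — so $\|\rho(v)\|=\|v\|$ and $\rho(v)$, $v$ have the same orthogonal projection onto $V^{K_{H_{0}}}=\mathfrak{a}^{W_{H_{0}}}$ — we obtain $\psi(v)=\psi(0)+d(\psi|_{\mathfrak{a}})_{0}(\mathrm{pr}_{V^{K_{H_{0}}}}v)+o(\|v\|)$, so $\psi$ is differentiable at $0$; and $d\psi$ is continuous at $0$ because near $0$ it is controlled by $d(\psi|_{\mathfrak{a}})$, whose limit $d(\psi|_{\mathfrak{a}})_{0}$ is supported on the fixed subspace $V^{K_{H_{0}}}$ and is therefore insensitive to the $\mathrm{Ad}(K_{H_{0}})$-ambiguity in $\rho$. (Equivalently, this is the standard fact that a $W_{H_{0}}$-invariant $C^{1}$ function on the section extends to a $C^{1}$ $K_{H_{0}}$-invariant function on $V$.) Hence $\Phi$ is $C^{1}$ on $\mathfrak{m}\setminus\Phi^{-1}(\{0\})$.

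For the final clause, the forward implication is trivial. Conversely, assume $J(d\Phi)_{x}\leq1$ for all $x\in\mathcal{C}\setminus\Phi^{-1}(\{0\})$, and let $H\in\overline{\mathcal{C}}\setminus\Phi^{-1}(\{0\})$, say $H\in\mathcal{C}^{\Delta}$ with $\Delta_{0}\subset\Delta$ and $f(H)>0$. For small $t>0$ the point $H_{t}:=H+t\sum_{\alpha\in F\setminus\Delta}H_{\alpha}$ satisfies $\langle\beta,H_{t}\rangle>0$ for every $\beta\in F$, so $H_{t}\in\mathcal{C}$, and by continuity of $f$ together with $f(H)>0$ we have $H_{t}\notin\Phi^{-1}(\{0\})$; since $\Phi$ is $C^{1}$ near $H$ by the first part, $J(d\Phi)$ is continuous there, whence $J(d\Phi)_{H}=\lim_{t\to0^{+}}J(d\Phi)_{H_{t}}\leq1$. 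With the reduction of the first paragraph this gives $J(d\Phi)_{x}\leq1$ for all $x\in\mathfrak{m}\setminus\Phi^{-1}(\{0\})$, i.e.\ $\Phi$ is area-nonincreasing.
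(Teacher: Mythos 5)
Your argument is correct in substance, and the outer layers (the reduction to $\overline{\mathcal{C}}$ by $\mathrm{Ad}(K)$-equivariance, and the continuity argument that upgrades $J(d\Phi)_x\le 1$ from $\mathcal{C}$ to $\overline{\mathcal{C}}$) coincide with the paper's. Where you genuinely diverge is the core $C^1$ step. The paper works concretely: using Lemma \ref{lem:decomp} it writes $\mathfrak{m}=\mathfrak{a}+\sum\mathbb{R}T_{\lambda}$ and computes the directional derivative of $\Phi$ at $H$ along each $T_{\lambda}$ by hand --- for $\langle\lambda,H\rangle=0$ by conjugating the line $H+tT_{\lambda}$ into $\mathfrak{a}$ with an element of $Z_K^H$ and invoking the hypothesis on $\Phi|_{\mathfrak{a}}$, and for $\langle\lambda,H\rangle\neq 0$ by differentiating along the curve $\mathrm{Ad}\bigl(\exp(-tS_{\lambda}/\langle\lambda,H\rangle)\bigr)H$, which yields the explicit formula $d\Phi_x(T_{\lambda})=\frac{\langle\lambda,A\rangle}{\langle\lambda,x\rangle}f(x)T_{\lambda}$ that is then reused verbatim in Proposition \ref{prop:jacobian}. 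You instead invoke the slice theorem, observe that $\Phi$ restricted to the slice at $H_0$ is a $K_{H_0}$-invariant map into $\mathbb{R}_{\ge 0}A$, and reduce everything to the $C^1$ extension property of $W_{H_0}$-invariant functions from the section $\mathfrak{a}$ to the (polar) slice representation. Each route buys something: the paper's computation is elementary and delivers the explicit differential needed later, but it only exhibits directional derivatives and simply asserts $C^1$; your approach addresses head-on why differentiability holds across the singular strata and why the derivative is continuous there, which is the genuinely delicate point. The cost is that you import heavier machinery (slice theorem, polarity of slice representations, the invariant-function extension fact), and two of your steps remain sketched rather than proved --- the induction ``away from $0\in V$ the same reduction applies with strictly smaller slices,'' and the continuity of $d\psi$ at $0$, where ``controlled by $d(\psi|_{\mathfrak{a}})$'' hides the need to bound $d(\psi|_{\mathfrak{a}})_{\rho(v)}\circ\mathrm{Ad}(k)^{-1}$ uniformly in the noncanonical $k$; both are fixable (the second because the non-fixed part of $d(\psi|_{\mathfrak{a}})_{\rho(v)}$ tends to $0$ in norm and $\mathrm{Ad}(k)$ is an isometry), but should be spelled out. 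Note also that the explicit directional derivatives the paper extracts here are not optional decoration: without them, Proposition \ref{prop:jacobian} would need a separate computation of $d\Phi_x$ on $\sum_{\mu}\mathfrak{m}_{\mu}$, so if you adopt your route you must still derive that formula somewhere.
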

\begin{proof}
	If $\Phi$ is $C^{1}$ at $H \in \overline{\mathcal{C}}$, then $\Phi$ is $C^{1}$ at $\mathrm{Ad}(k)H$ for all $k \in K$.
	Thus we assume $H \in \overline{\mathcal{C}}\setminus \Phi^{-1}(\{0\})$.
	For $H \in \overline{\mathcal{C}}\setminus \Phi^{-1}(\{0\})$, we put $\Delta =\{\alpha \in F \mid  \langle \alpha , H\rangle >0 \}$. 
	Since $f(H)>0$, we get $\Delta_{0} \subset \Delta $ and $\mathcal{C}^{\Delta_{0}} \subset \overline{\mathcal{C}^{\Delta}}$.
	By Lemma \ref{lem:decomp}, we have 
	$$\mathfrak{m} = \mathfrak{a}+ \sum_{\lambda \in \tilde{R}_{+} \setminus \tilde{R}_{0}} \mathbb{R}\cdot T_{\lambda}.$$
	Since $\Phi|_{\mathfrak{a}\setminus \Phi^{-1}(\{0\})}$ is $C^{1}$, we consider only $T_{\lambda}$ direction for each $\lambda \in \tilde{R}_{+} \setminus \tilde{R}_{0}$.
	If $\langle \lambda ,H \rangle=0$, then $[T_{\lambda},H ] = \langle \lambda ,H \rangle S_{\lambda} =0$ from Lemma \ref{lem:decomp}.
	Thus there exists $k \in Z_{K}^{H} =\{k \in K \ | \ {\rm Ad}(k)H=H \}$ such that  $\mathrm{Ad}(k)T_{\lambda} \in \mathfrak{a}$.
	Therefore 
	\begin{eqnarray*}
	\Phi (H+t T_{\lambda})= \mathrm{Ad}(k)^{-1} \Phi (\mathrm{Ad}(k)(H+tT_{\lambda})).
	\end{eqnarray*}
	Since $\mathrm{Ad}(k)(H+tT_{\lambda}) \in \mathfrak{a}$ and $\Phi|_{\mathfrak{a}\setminus \Phi^{-1}(\{0\})}$ is $C^{1}$,
	we have the directional derivative of $\Phi$ along $T_{\lambda}$. 
	If $\langle \lambda ,H \rangle \neq 0$, then from Lemma \ref{lem:decomp} we have
	that $c(t)=\mathrm{Ad}\left( \exp(-tS_{\lambda}/\langle \lambda ,H \rangle )\right)H$
	is curve in $\mathfrak{m}$ with $c(0)=H$ and $c'(0)=T_{\lambda}$. 
	Thus
	\begin{eqnarray*}
	\left. \frac{d}{dt}\right|_{t=0} \Phi (c(t))&=& \left. \frac{d}{dt}\right|_{t=0} \Phi \left( \mathrm{Ad}\left( \exp\frac{-tS_{\lambda} }{\langle \lambda ,H \rangle} \right)H \right)\\
	&=&\frac{[-S_{\lambda} ,\phi (H) ]}{\langle \lambda ,H \rangle}= \frac{\langle \lambda ,A \rangle}{\langle \lambda ,H \rangle}f(H)T_{\lambda}.
	\end{eqnarray*}
	Therefore $\Phi$ is a differentiable retraction from $\mathfrak{m}$ into $C_{\mathrm{Ad}(K)A}$.
	Since $\Phi|_{\mathfrak{m}\setminus \Phi^{-1}(\{0\})}$ is $C^{1}$, the mapping $\overline{\mathcal{C}}\setminus \Phi^{-1}(\{0\}) \to \mathbb{R} ;x \mapsto J(d\Phi_{x})$ is continuous.
	Hence, if $J(d\Phi_{x}) \leq 1 \ (x\in \mathcal{C} \setminus \Phi^{-1}(\{0\}))$, then 
	$J(d\Phi_{x}) \leq 1 \ (x\in \overline{\mathcal{C}} \setminus \Phi^{-1}(\{0\}))$.
	\end{proof}
	
	We will compute $J(d\Phi_{x})$ of $\Phi$ in Theorem \ref{thm:retr} for $x \in \mathcal{C} \setminus \Phi^{-1}(\{0\})$.
	
	\begin{prop}\label{prop:jacobian}
	We denote $R_{+}^{\Delta_{0}} = \{ \lambda \in R_{+} \mid \langle \lambda , A \rangle=0 \}$.
	$$J(d\Phi_{x})=\|(\mathrm{grad}f)_{x}\| \prod_{\lambda \in R_{+}\setminus R_{+}^{\Delta_{0}}} \left( \frac{\langle \lambda ,A  \rangle}{\langle \lambda ,x  \rangle} f(x)\right)^{m(\lambda)} \ (x \in \mathcal{C} \setminus \Phi^{-1}(\{0\})).$$
	\end{prop}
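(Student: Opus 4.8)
The plan is to reduce the computation to the block-diagonal description of $d\Phi_x$ already established in the proof of Proposition \ref{C1}, and then read off the Jacobian as the product of the nonzero singular values of $d\Phi_x$ restricted to $(\ker d\Phi_x)^{\perp}$.

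First I would fix $x\in\mathcal{C}\setminus\Phi^{-1}(\{0\})$, so that $f(x)>0$ and $\langle\lambda,x\rangle>0$ for every $\lambda\in R_{+}$ (here $\langle\mu,x\rangle=\langle\overline{\mu},x\rangle$ for $x\in\mathfrak{a}$). Working in the orthogonal decomposition $\mathfrak{m}=\mathfrak{a}\oplus\sum_{\lambda\in R_{+}}\mathfrak{m}_{\lambda}$ of Lemma \ref{lem:decomp}(1), with the orthonormal bases $\{T_{\mu}:\overline{\mu}=\lambda\}$ of $\mathfrak{m}_{\lambda}$ from Lemma \ref{lem:decomp}(2), the computations in the proof of Proposition \ref{C1} show that $d\Phi_{x}$ preserves this decomposition and acts by
\begin{align*}
d\Phi_{x}(v)&=\langle(\mathrm{grad}\,f)_{x},v\rangle\,A\qquad(v\in\mathfrak{a}),\\
d\Phi_{x}(T_{\mu})&=\frac{\langle\lambda,A\rangle}{\langle\lambda,x\rangle}\,f(x)\,T_{\mu}\qquad(\overline{\mu}=\lambda\in R_{+}).
\end{align*}
Thus $d\Phi_{x}$ is the rank-one map $v\mapsto\langle(\mathrm{grad}\,f)_{x},v\rangle A$ on $\mathfrak{a}$, it is an isomorphism of $\mathfrak{m}_{\lambda}$ onto itself (scaling by $\langle\lambda,A\rangle f(x)/\langle\lambda,x\rangle\neq 0$) when $\lambda\in R_{+}\setminus R_{+}^{\Delta_{0}}$, and it is zero on $\mathfrak{m}_{\lambda}$ when $\lambda\in R_{+}^{\Delta_{0}}$.

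Next I would identify the target tangent space. Since $[\mathfrak{k},A]=\sum_{\lambda\in R_{+}\setminus R_{+}^{\Delta_{0}}}\mathfrak{m}_{\lambda}$ by Lemma \ref{lem:decomp}(2) (only the $S_{\mu}$ with $\langle\mu,A\rangle\neq 0$ move $A$), the tangent space of $C_{\mathrm{Ad}(K)A}$ at the smooth point $\phi(x)=f(x)A$ is $\mathbb{R}A\oplus\sum_{\lambda\in R_{+}\setminus R_{+}^{\Delta_{0}}}\mathfrak{m}_{\lambda}$, of dimension $k=1+\sum_{\lambda\in R_{+}\setminus R_{+}^{\Delta_{0}}}m(\lambda)=\dim C_{\mathrm{Ad}(K)A}$. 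If $(\mathrm{grad}\,f)_{x}=0$, the image of $d\Phi_{x}$ is only $\sum_{\lambda\in R_{+}\setminus R_{+}^{\Delta_{0}}}\mathfrak{m}_{\lambda}$, a proper subspace, so $d\Phi_{x}$ is not surjective and $J(d\Phi_{x})=0$; this agrees with the asserted formula, whose right-hand side also vanishes. Otherwise $d\Phi_{x}$ is surjective onto that tangent space, $(\ker d\Phi_{x})^{\perp}=\mathbb{R}(\mathrm{grad}\,f)_{x}\oplus\sum_{\lambda\in R_{+}\setminus R_{+}^{\Delta_{0}}}\mathfrak{m}_{\lambda}$, and I would evaluate $d\Phi_{x}$ on the orthonormal basis of this complement consisting of $u_{0}=(\mathrm{grad}\,f)_{x}/\|(\mathrm{grad}\,f)_{x}\|$ and the $T_{\mu}$ with $\overline{\mu}=\lambda\in R_{+}\setminus R_{+}^{\Delta_{0}}$: one gets $d\Phi_{x}(u_{0})=\|(\mathrm{grad}\,f)_{x}\|A$ and $d\Phi_{x}(T_{\mu})=(\langle\lambda,A\rangle/\langle\lambda,x\rangle)f(x)T_{\mu}$. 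Pulling the scalars out of the wedge, and using that $\{A\}\cup\{T_{\mu}\}_{\overline{\mu}\in R_{+}\setminus R_{+}^{\Delta_{0}}}$ is an orthonormal basis of the target (so $\|A\wedge\bigwedge_{\lambda,\mu}T_{\mu}\|=1$, where $\|A\|=1$ since $\mathrm{Ad}(K)A$ lies in the unit sphere and $A\in\mathfrak{a}$ is orthogonal to every $\mathfrak{m}_{\lambda}$), yields exactly the claimed product.

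I do not expect a serious obstacle here: once the block-diagonal form of $d\Phi_{x}$ is in hand the identity is essentially forced. The points requiring care are purely bookkeeping ones: all roots $\mu\in\tilde{R}_{+}$ lying over a fixed restricted root $\lambda$ give the same scalar $\langle\lambda,A\rangle f(x)/\langle\lambda,x\rangle$, so each $\lambda$ contributes an $m(\lambda)$-th power; the degenerate case $(\mathrm{grad}\,f)_{x}=0$ must be treated separately via non-surjectivity; and one must check that $A$ together with the chosen $T_{\mu}$'s really form an orthonormal basis of $T_{\phi(x)}C_{\mathrm{Ad}(K)A}$, so that no spurious constant enters the Jacobian.
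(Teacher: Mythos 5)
Your argument is correct and follows essentially the same route as the paper: both rest on the block-diagonal form of $d\Phi_{x}$ derived in the proof of Proposition \ref{C1}, with the paper factoring $J(d\Phi_{x})=J(d\Phi_{x}|_{\mathfrak{a}})\cdot J(d\Phi_{x}|_{\sum_{\mu}\mathfrak{m}_{\mu}})$ and you equivalently evaluating the wedge on an orthonormal basis of $(\ker d\Phi_{x})^{\perp}$. Your explicit treatment of the degenerate case $(\mathrm{grad}\,f)_{x}=0$ and the check that $A$ together with the $T_{\mu}$ is orthonormal are sound, if slightly more detailed than the paper's version.
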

	\begin{proof}
	From the proof of Proposition \ref{C1}, we have 
	
$$
	d\Phi_{x}(H) =df_{x}(H)A  \ (H \in \mathfrak{a}),\quad
	d\Phi_{x}(T_{\lambda})=\frac{\langle \lambda , A \rangle}{\langle \lambda , x\rangle}f(x)T_{\lambda}\ (\lambda \in \tilde{R}_{+} \setminus \tilde{R}_{0})
$$
	for $x \in \mathcal{C} \setminus \Phi^{-1}(\{0\})$.
	Thus we get 
	$$d\Phi_{x}(\mathfrak{a})\subset \mathbb{R}A \subset \mathfrak{a},\quad d\Phi_{x}\left( \sum_{\mu \in R_{+}}\mathfrak{m}_{\mu}\right) \subset \sum_{\mu \in R_{+}}\mathfrak{m}_{\mu}.$$
	Since $\mathfrak{a}$ and $\sum_{\mu \in R_{+}}\mathfrak{m}_{\mu}$ are orthogonal, 
	we have 
	$$J(d\Phi_{x})=J(d\Phi_{x}|_{\mathfrak{a}})\times J(d\Phi_{x}|_{\sum_{\mu\in R_{+}}\mathfrak{m}_{\mu}} ).$$
	We put $J_{1}(x)=J(d\Phi_{x}|_{\mathfrak{a}}), J_{2}(x)=J(d\Phi_{x}|_{\sum_{\mu\in R_{+}}\mathfrak{m}_{\mu}})$
	and compute each of these.
	\begin{eqnarray*}
	J_{1}(x)&=&\sup \{\|d\Phi_{x}(v)\| \mid v \in \mathfrak{a}, \|v\|=1 \}\\
	&=&\sup \{\langle (\mathrm{grad}f)_{x},  v\rangle  \mid v \in \mathfrak{a}, \|v\|=1 \} =\| (\mathrm{grad}f)_{x}\|.
	\end{eqnarray*}
	Since $\ker \left( d\Phi_{x}|_{\sum_{\mu\in R_{+}}\mathfrak{m}_{\mu}}\right) =\sum_{\mu \in R_{+}^{\Delta_{0}}} \mathfrak{m}_{\mu}$,
	$\{T_{\lambda} \mid \lambda \in \tilde{R}_{+}, \langle \lambda , A\rangle >0\}$ is an orthonormal basis of 
	$\ker \left( d\Phi_{x}|_{\sum_{\mu\in R_{+}}\mathfrak{m}_{\mu}}\right)^{\perp}=\sum_{\mu \in R_{+}\setminus R_{+}^{\Delta_{0}}}\mathfrak{m}_{\mu}$.
	Hence
		\begin{eqnarray*}
	J_{2}(x)&=& \left\| \bigwedge _{\lambda \in \tilde{R}_{+},  \langle \lambda , A \rangle >0 } d\Phi_{x}(T_{\lambda }) \right\|=
\left\| \bigwedge _{\lambda \in \tilde{R}_{+},  \langle \lambda , A \rangle >0 } \frac{\langle \lambda , A\rangle }{\langle \lambda , x\rangle} f(x)T_{\lambda } \right\|\\
	        &=& \prod_{\lambda \in \tilde{R}_{+},  \langle \lambda , A \rangle >0 } \frac{\langle \lambda , A\rangle }{\langle \lambda , x\rangle} f(x) = \prod_{ \lambda \in R_{+}\setminus R_{+}^{\Delta_{0}}} \left(  \frac{\langle \lambda , A\rangle }{\langle \lambda , x\rangle} f(x) \right)^{m(\lambda)}.
	\end{eqnarray*}
	Therefore we get
	$$J(d \Phi)_{x}= J_{1}(x) J_{2}(x)= \|({\rm grad}f)_{x} \| \prod_{\lambda \in R_{+} \setminus R_{+}^{\Delta_{0}}} \left( \frac{\langle \lambda , A\rangle }{\langle \lambda , x\rangle }f(x)\right) ^{m(\lambda)} .$$
	\end{proof}
	
\section{Example of area-minimizing cones over R-spaces}\label{eg} 
	Using Theorem \ref{thm:retr}, Proposition \ref{C1} and Proposition \ref{prop:jacobian}, we investigate area-minimizing properties of cones over R-spaces. 
	First we consider cones over isolated orbits of $s$-representations of irreducible symmetric pairs of compact type  of rank two.
Principal orbits of these representations are homogeneous  hypersurfaces in the sphere.
The area-minimizing properties of the cones over homogeneous minimal hypersurfaces were investigated in \cite{Hs} and \cite{L}.

We shall follow the notations of root systems in \cite{Bo}.
	Partly we used Maxima\footnote{http://maxima.sourceforge.net/} for algebraic computations.
	
\subsection{Type ${\rm A}_{2}$.} 
	$$\mathfrak{a}=\{\xi_{1} e_{1}+ \xi_{2} e_{2}+ \xi_{3} e_{3} \mid \xi_{1}+ \xi_{3}+\xi_{3}=0 \},$$
	$$F=\{\alpha_{1} = e_{1}-e_{2} , \alpha_{2}= e_{2}-e_{3}\}.$$
	Then we have $R_{+}=\{\alpha_{1} , \alpha_{2}, \alpha_{1}+\alpha_{2}\}$. For $\lambda \in R_{+}$, we put $m=m(\lambda)$. 
	We have 
	$$
	H_{\alpha_{1}}=\frac{1}{3}(2e_{1}-e_{2}-e_{3}),\quad H_{\alpha_{2}}=\frac{1}{3}(e_{1}+e_{2}-2e_{3}).
	$$
	We put 
	$$
	A_{1}=\frac{H_{\alpha_{1}}}{\|H_{\alpha_{1}}\|}=\frac{1}{\sqrt{6}}(2e_{1}-e_{2}-e_{3}),\quad A_{2}=\frac{H_{\alpha_{2}}}{\|H_{\alpha_{2}}\|}=\frac{1}{\sqrt{6}}(e_{1}+e_{2}-2e_{3}).
	$$
	Since $\mathrm{Ad}(K)A_{1}$ and $\mathrm{Ad}(K)A_{2}$ are isometric,
	we consider only the cone over $\mathrm{Ad}(K)A_{1}$.
\subsubsection{Cones over $\mathrm{Ad}(K)A_{1}$.} 
	We put $\Delta_{0} = \{\alpha_{1}\}$ then $R_{+}^{\Delta_{0}}=\{\alpha_{2}\}$.
	For $x= x_{1}H_{\alpha_{1}}+x_{2}H_{\alpha_{2}} \in \overline{\mathcal{C}}$, we define 
	$$
	f(x)=\sqrt{\frac{2}{3}}\left(\langle \alpha_{1} ,x \rangle ^{2} \left\langle  \alpha_{1} +\frac{3}{2}\alpha_{2} ,x \right\rangle\right)^{\frac{1}{3}} =\sqrt{\frac{2}{3}}\left( x_{1}^{2} \left(x_{1} +\frac{3}{2} x_{2}\right) \right)^{\frac{1}{3}}.
	$$
	Since 
\begin{enumerate}
	\item[(1)] $f(tA_{1})=\sqrt{\frac{2}{3}}\left( \left(\sqrt{\frac{3}{2}}t\right)^3 \right)^{\frac{1}{3}}=t$,
	\item[(2)] for each $\Delta \subset F$, if $\Delta_{0} \not\subset \Delta$, then $f|_{\mathcal{C}^{\Delta}}=0$, 
\end{enumerate}
	we can apply Theorem \ref{thm:retr} to this case.
	It is clear that $\Phi|_{\mathfrak{a}\setminus \Phi^{-1}(\{0\})}$ is $C^{1}$. 
	Thus $\Phi$ is a differentiable retraction by Proposition \ref{C1}.
Since
	\begin{eqnarray*}
			\frac{\partial f}{\partial x_{1}}(x)&=&  \sqrt{\frac{2}{3}} \left( x_{1}^{2} \left( x_{1} + \frac{3}{2} x_{2} \right) \right)^{- \frac{2}{3}} (x_{1}^{2}+x_{1}x_{2}),\\
			\frac{\partial f}{\partial x_{2}}(x)&=&  \sqrt{\frac{2}{3}} \left( x_{1}^{2} \left( x_{1} + \frac{3}{2} x_{2} \right) \right)^{- \frac{2}{3}} \frac{x_{1}^2}{2},
	\end{eqnarray*}
	we get 
	$$
	J_{1}(x)=\|({\rm grad }f)_{x} \| = \sqrt{\frac{2}{3}} \left( x_{1}^{2} \left( x_{1} + \frac{3}{2} x_{2} \right) \right)^{- \frac{2}{3}} \sqrt{\frac{3}{2}x_{1}^{4} +3 x_{1}^{3} x_{2} + 2x_{1}^{2}x_{2}^{2} }.
	$$	
	On the other hand,
	$$
			J_{2}(x)= \left(\frac{\langle \alpha _{1} , A_{1} \rangle}{\langle \alpha _{1} , x \rangle}f(x) \right) ^{m} 
                        \left(\frac{\langle \alpha _{1}+\alpha _{2} , A_{1} \rangle}{\langle \alpha _{1}+\alpha _{2} , x \rangle}f(x) \right) ^{m}
					= \left( \frac{\left( x_{1} \left( x_{1} +\frac{3}{2}x_{2} \right) ^2 \right)^{ \frac{1}{3}} } { x_{1}+x_{2} } \right) ^{m}.
	$$	
	Then 
	$$(x_{1}+x_{2})^3 -x_{1}\left( x_{1}+ \frac{3}{2} x_{2} \right)^2= \frac{3}{4}x_{1}x_{2}^{2} +x_{2}^{3} \geq 0,$$
	thus 
	$$\left( \frac{\left( x_{1} \left( x_{1} +\frac{3}{2}x_{2} \right) ^2 \right)^{ \frac{1}{3}} } { x_{1}+x_{2} } \right) \leq 1.$$
	We put 
	$$D= J_{1}(x)\times \left( \frac{\left( x_{1} \left( x_{1} +\frac{3}{2}x_{2} \right) ^2 \right)^{ \frac{1}{3}} } { x_{1}+x_{2} } \right)^2 =\left( \frac{(3x_{1}^2 + 6x_{1}x_{2} +4 x_{2}^{2} )^3 x_{1}^{2} (2 x_{1}+ 3 x_{2})^4}{3^3 2^4 (x_{1}+x_{2})^{12}}\right)^{\frac{1}{6}}.$$
	Since 
	$$J(d \Phi )_{x}= D\times \left( \frac{\left( x_{1} \left( x_{1} +\frac{3}{2}x_{2} \right) ^2 \right)^{ \frac{1}{3}} } { x_{1}+x_{2} } \right)^{m-2},$$
	if $D \leq 1$, then $J(d\Phi_{x}) \leq 1$ for $m\geq 2$.
	Since 
	\begin{eqnarray*}
			&& 3^3 2^4 (x_{1}+x_{2})^{12}- (3x_{1}^2 + 6x_{1}x_{2} +4 x_{2}^{2} )^3 x_{1}^{2} (2 x_{1}+ 3 x_{2})^4\\
			&=&216 x_{1}^{10}x_{2}^{2}+2376x_{1}^{9}x_{2}^{3}+11925x_{1}^{8}x_{2}^{4}+35838x_{1}^{7}x_{2}^{5}+71120x_{1}^{6}x_{2}^{6}\\ 
			&&+96888x_{1}^{5}x_{2}^{7}+91152x_{1}^{4}x_{2}^{8}+57888x_{1}^{3}x_{2}^{9}+23328x_{1}^{2}x_{2}^{10}+5184x_{1}x_{2}^{11}+432x_{2}^{12}\\
			&\geq &0,
	\end{eqnarray*}
	we have $D\leq 1$.
	Therefore, cones over $\mathrm{Ad}(K)A_{1}$ are area-minimizing for $m \geq 2$.
	
\subsection{Types ${\rm B}_{2}$, ${\rm BC}_{2}$ and ${\rm C}_{2}$.}
	Types ${\rm C}_{2}$ and ${\rm B}_{2}$ are isomorphic, thus it suffices to compute the type ${\rm B}_{2}$ case.
	Moreover setting the multiplicity of long roots to zero, the set of restricted roots of type ${\rm BC}_{2}$ reduces to that of type ${\rm B}_{2}$.
	We have 
	$$F =\{\alpha _{1} =e_{1}-e_{2} ,\ \alpha _{2}= e_{2} \},$$ 
		$$R_{+}=\{\alpha _{1} ,\ \alpha _{2} ,\ \alpha _{1}+\alpha _{2} ,\ \alpha_{1}+2\alpha_{2},\ 2\alpha_{1}+2\alpha_{2},\ 2\alpha_{2}  \},$$
		$$H_{\alpha_{1}}= e_{1},\quad H_{\alpha_{2}}=e_{1}+e_{2},$$
	and put 
		$$m(\alpha_{1})=m_{1} ,\ m(\alpha_{2})=m_{2} ,\ m(2\alpha _{2})=m_{3}.$$
\subsubsection{Cones over ${\rm Ad}(K)A_{1}$.} 
		We put $\Delta_{0}=\{\alpha_{1} \}$, then we have  
		$$A_{1}=\frac{H_{\alpha_{1}}}{\|H_{\alpha_{1}}\|}= e_{1},$$
		and 
		$$R_{+}^{\Delta_{0}}=\{\lambda \in R_{+} \ | \ \langle \lambda , A_{1}\rangle =0\}=\{\alpha _{2} , 2\alpha_{2}\}.$$ 
		For $x=x_{1}H_{\alpha_{1}} +x_{2}H_{\alpha_{2}}\in \overline {\mathcal{C}}$, we define 
		$$f(x)= \sqrt{\langle \alpha_{1},x \rangle \langle \alpha_{1}+ 2\alpha _{2},x \rangle}=\sqrt{x_{1}(x_{1}+2x_{2})}.$$
		Then we can show that $f$ satisfies the condition of Theorem \ref{thm:retr} and $\Phi$ is differentiable. Moreover $J(d\Phi_{x}) \leq 1$ holds for $m_{2}+m_{3} \geq 2$.

	Therefore, cones over $\mathrm{Ad}(K)A_{1}$ are area-minimizing for $m_{2}+m_{3} \geq 2$.
	
\subsubsection{Cones over ${\rm Ad}(K)A_{2}$.}
		We put $\Delta_{0}=\{\alpha_{2} \}$, then we have 
		$$A_{2}=\frac{H_{\alpha_{2}}}{\|H_{\alpha_{2}}\|}= \frac{e_{1}+e_{2}}{\sqrt{2}}$$
		and 
		$$R_{+}^{\Delta_{0}}=\{\lambda \in R_{+} \ | \ \langle \lambda , A_{2}\rangle =0\}=\{\alpha _{1} \}.$$ 
		For $x= x_{1}H_{\alpha_{1}}+ x_{2}H_{\alpha_{2}} \in \overline{\mathcal{C}}$, we define 
		$$f(x)= \sqrt{2} \left( \langle \alpha_{2} , x \rangle^2 \left\langle \frac{3}{2} \alpha_{1}+ \alpha_{2} , x \right\rangle \right)^{\frac{1}{3}}=\sqrt{2} \left( x_{2}^2 \left(\frac{3}{2}x_{1} +x_{2} \right)\right)^{\frac{1}{3}}.$$
		Then we can show that $f$ satisfies the condition of Theorem \ref{thm:retr} and $\Phi$ is differentiable. Moreover $J(d\Phi_{x}) \leq 1$ holds for $m_{2}+m_{3} \geq 2$.

	Therefore, cones over $\mathrm{Ad}(K)A_{2}$ are area-minimizing for $m_{2}+m_{3}\geq 2$.

	\subsection{Type ${\rm G}_{2}$.}
		We have 
		$$F =\{\alpha _{1} ,\ \alpha _{2}\},$$ 
		$$R_{+}=\{\alpha _{1} ,\ \alpha _{2} ,\ \alpha _{1}+\alpha _{2} ,\ 2\alpha _{1}+\alpha _{2} ,\ 3\alpha _{1}+\alpha _{2} ,\ 3\alpha _{1}+2\alpha _{2}  \},$$
		$$\langle \alpha _{1} , \alpha_{1} \rangle =1,\ \langle \alpha _{1} , \alpha_{2} \rangle = -\frac{3}{2},\ \langle \alpha _{2} , \alpha_{2} \rangle =3,$$
		$$H_{\alpha_{1}}= 4\alpha _{1} +  2\alpha_{2} ,\quad H_{\alpha_{2}}= \frac{2}{3} (3 \alpha_{1} +2 \alpha _{2}),$$
		and put
		$$m=m(\alpha_{1}) = m(\alpha_{2}).$$
	 
\subsubsection{Cones over ${\rm Ad}(K)A_{1}$.}
		We put $\Delta_{0}=\{\alpha_{1} \}$ then we have  
		$$A_{1}=\frac{H_{\alpha_{1}}}{\|H_{\alpha_{1}}\|}$$
		and 
		$$R_{+}^{\Delta_{0}}=\{\lambda \in R_{+} \ | \ \langle \lambda , A_{1}\rangle =0\}=\{\alpha _{2} \}.$$
		For $x=x_{1}H_{\alpha_{1}} +x_{2}H_{\alpha_{2}} \in \overline{\mathcal{C}}$, we define 
		$$f(x)= \sqrt{4\langle \alpha_{1},x \rangle \langle \alpha_{1}+ \alpha _{2},x \rangle}=\sqrt{4x_{1}(x_{1}+x_{2})}.$$
		Then we can show that $f$ satisfies the condition of Theorem \ref{thm:retr} and $\Phi$ is differentiable. Moreover $J(d\Phi_{x}) \leq 1$ holds for $m \geq 2$.

	Therefore cones over $\mathrm{Ad}(K)A_{1}$ are area-minimizing for $m \geq 2$. 

\subsubsection{Cones over ${\rm Ad}(K)A_{2}$.}
		We put $\Delta_{0}=\{\alpha_{2} \}$ then we have 
		$$A_{2}=\frac{H_{\alpha_{2}}}{\|H_{\alpha_{2}}\|},$$
		and
		$$R_{+}^{\Delta_{0}}=\{\lambda \in R_{+} \ | \ \langle \lambda , A_{1}\rangle =0\}=\{\alpha _{1} \}.$$ 
		For $x=x_{1}H_{\alpha_{1}}+ x_{2}H_{\alpha_{2}} \in \overline{\mathcal{C}}$, we define
		$$f(x)= \sqrt{\frac{4}{3}\langle \alpha_{2},x \rangle \langle 3 \alpha_{1}+ \alpha _{2},x \rangle}=\sqrt{\frac{4}{3}x_{2}(3 x_{1}+x_{2})}.$$
		Then we can show that $f$ satisfies the condition of Theorem \ref{thm:retr} and $\Phi$ is differentiable. Moreover $J(d\Phi_{x}) \leq 1$ holds for $m \geq 2$.

		Therefore, cones over $\mathrm{Ad}(K)A_{2}$ are area-minimizing for $m \geq 2$. 

\bigskip		
	By the above computation, we get the following table of cones over isolated orbits of the $s$-representations of irreducible symmetric spaces of rank two.

		\begin{landscape}
	\hspace{-24pt} \begin{tabular}{|c|c|c|c|cc|c|cc|}
\hline
type      &    symmetric pair                              &  multiplicities& $A_{i}$&orbit                   &symm. or not &$\substack{{\rm dim.\ of\ orbit} \\ {\rm and\ sphere}}$& area-min.&\\ \hline \hline
${\rm A}_{2}$  & $({\rm SU}(3), {\rm SO}(3))$                   & $(1, 1)$  & $A_{1}$&  $\mathbb{R}P^2 $      & \text{symmetric}&$(2,4)$&            &\\ 
			   & $({\rm SU}(3)\times {\rm SU}(3), {\rm SU}(3))$ & $(2, 2)$  & $A_{1}$&  $\mathbb{C}P^2 $     &\text{symmetric}&$(4,7)$& $\bigcirc$  &\cite{Ke} \\
			   & $({\rm SU}(6), {\rm Sp}(3))$                   & $(4, 4)$  & $A_{1}$&  $\mathbb{H}P^2 $  &\text{symmetric}&$(8,13)$& $\bigcirc$  &\cite{Ka}\\
			   & $(E_{6},F_{4})       $                         & $(8, 8)$  & $A_{1}$&  $\mathbb{O}P^2 $  &\text{symmetric}&$(16,25)$& $\bigcirc$  &\\ \hline
${\rm B}_{2}$  & $({\rm SO}(5)\times {\rm SO}(5), {\rm SO}(5))$ & $(2, 2)$  & $A_{1}$&  $\widetilde{G_{2}(\mathbb{R}^{5})} $ &\text{symmetric}&$(6,9)$&$\bigcirc$&\cite{HKT}\\ 
			   &                                                &           & $A_{2}$&  ${\rm SO}(5)/{\rm U}(2) $&&$(6,9)$& $\bigcirc$&\cite{Ke} \\
			   & $({\rm SO}(5),{\rm SO}(2)\times {\rm SO}(3))$  & $(1, 1)$  & $A_{1}$&                        &\text{symmetric}&$(3,5)$& &\\
			   &                                                &           & $A_{2}$&                         &&$(3,5)$& &\\ 
			   & $({\rm SO}(4+n),{\rm SO}(2)\times {\rm SO}(2+n))$& $(1, n)$& $A_{1}$&                         &\text{symmetric}&$(n+2,2n+3)$& $\bigcirc \ (n\geq 2)$&\cite{HKT}\\
			   &                                                &           & $A_{2}$&                         &&$(2n+1,2n+3)$& $\bigcirc \ (n\geq 2)$&\\ \hline
${\rm C}_{2}$  &$({\rm Sp}(2), {\rm U}(2))$                     & $(1,1)$   & $A_{1}$&                         & &$(3,5)$&&\\
			   &                                                &           & $A_{2}$&  ${\rm U}(2)/{\rm O}(2) $&\text{symmetric}&$(3,5)$&&\\
			   &$({\rm Sp}(2)\times {\rm Sp}(2), {\rm Sp}(2))$  & $(2,2)$   & $A_{1}$&                          &&$(6,9)$&$\bigcirc$ &\\
			   &                                                &           & $A_{2}$&  ${\rm Sp}(2)/{\rm U}(2) $&\text{symmetric}&$(6,9)$&$\bigcirc$ &\\
			   &$({\rm Sp}(4),{\rm Sp}(2)\times {\rm Sp}(2))$   & $(4,3)$   & $A_{1}$&                          &&$(11,15)$&$\bigcirc$ &\\
			   &                                                &           & $A_{2}$&  ${\rm Sp}(2) $&          \text{symmetric}&$(10,15)$&$\bigcirc$& \\
			   &$({\rm SU}(4),{\rm S}({\rm U}(2)\times {\rm U}(2)))$& $(2,1)$&$A_{1}$&                          &&$(5,7)$&$\bigcirc$&\\
			   &                                                &           & $A_{2}$&  ${\rm U}(2)  $          &\text{symmetric}&$(4,7)$&$\bigcirc$&\cite{L} \\
			   &$({\rm SO}(8), {\rm U}(4))$                     & $(4,1)$   & $A_{1}$&  ${\rm U}(4)/({\rm Sp(1)\times {\rm U}(2)}) $&&$(9,11)$&$\bigcirc $&\\
			   &                                                &           & $A_{2}$&  ${\rm U}(4)/{\rm Sp}(2) $ &\text{symmetric}&$(6,11)$&$\bigcirc$&\\ \hline
\end{tabular}
\newpage 
	\begin{tabular}{|c|c|c|c|cc|c|cc|}
\hline
type      &    symmetric pair                              &  multiplicities& $A_{i}$&orbit                               &symm. or not&$\substack{{\rm dim.\ of\ orbit} \\ {\rm and\ sphere}}$& area-min.&\\ \hline \hline
${\rm BC}_{2}$ &$({\rm SU}(4+n),{\rm S}({\rm U}(2)\times {\rm U}(2+n)))$&$(2,(2n,1))$&$A_{1}$&  &&$(2n+3,4n+7)$&$\bigcirc \ (n\geq 1)$ &\\
			   &                                                        &            &$A_{2}$& &&$(4n+4,4n+7)$&$\bigcirc \ (n\geq 1)$ &\\
			   &$({\rm SO}(10),{\rm U}(5))$                     &$(4,(4,1))$&$A_{1}$& ${\rm U}(5)/({\rm Sp}(1)\times {\rm U}(3))$&&$(13,19)$& $\bigcirc$ & \\
			   &                                                &           &$A_{2}$& ${\rm U}(5)/({\rm Sp}(2)\times {\rm U}(1))$&&$(14,19)$& $\bigcirc$&\\
			   &$({\rm Sp}(4+n),{\rm Sp}(2)\times {\rm Sp}(2+n))$&$(4,(4n,3))$&$A_{1}$&                                    &&$(4n+11,8n+15)$&$\bigcirc \ (n\geq 1)$ &\\
			   &                                                 &            &$A_{2}$&                                    &&$(8n+10,8n+15)$&$\bigcirc \ (n\geq 1)$ &\\
			   &$(E_{6},\mathrm{T}^{1}\cdot \mathrm{Spin}(10))$        & $(6,(8,1))$ &$A_{1}$&                                    &&$(21,31)$&$\bigcirc$&\\
			   &                                                &             &$A_{2}$&                                    &&$(24,31)$&$\bigcirc$&\\ \hline
${\rm G}_{2}$  &$(G_{2}, {\rm SO}(4) )$                         & $(1,1)$     &$A_{1}$&                                    &&$(5,7)$&&\\
			   &                                                &             &$A_{2}$&                                    &&$(5,7)$&&\\
			   &$(G_{2} \times G_{2}, G_{2})$                          & $(2,2)$     &$A_{1}$&                               &     &$(10,13)$&$\bigcirc$&\\
			   &                                                &             &$A_{2}$&                                    &&$(10,13)$&$\bigcirc$&\\ \hline
	\end{tabular}
\end{landscape}

	\subsection{Type ${\rm A}_{3}$. }
	Theorem \ref{thm:retr} can be applied to cones over minimal orbits, not only isolated orbits.
	We demonstrate the area-minimizing property for the cone over a minimal orbit, which is not an isolated orbit, of the $s$-representation of symmetric spaces of type ${\rm A}_{3}$.
	$$\mathfrak{a} = \left\{ \sum_{i=1}^{4} \xi_{i} e_{i} \ | \ \sum_{i=1}^{4} \xi_{i}=0 \right\},$$
	$$F =\{\alpha _{1} =e_{1}-e_{2} ,\ \alpha _{2}= e_{2}-e_{3}  ,\ \alpha _{3} =e_{3 } -e_{4}\}.$$
	Then $R_{+}=\{\alpha _{1} , \alpha _{2} ,\alpha _{3} ,\ \alpha _{1}+\alpha _{2},\ \alpha_{2}+\alpha_{3} ,\ \alpha_{1}+\alpha_{2}+\alpha_{3} \}$ 
	and for $\lambda \in R_{+}$, we put $m(\lambda)=m$.
	We have 
	$$H_{\alpha_{1}}= \frac{1}{4}(3e_{1}-e_{2}-e_{3}-e_{4}),\ H_{\alpha_{2}}=\frac{1}{4}(2e_{1}+2e_{2}-2e_{3}-2e_{4}),\ H_{\alpha_{3}} =\frac{1}{4} (e_{1}+e_{2}+e_{3}-3e_{4}).$$
	We put $\Delta_{0}=\{\alpha_{1} ,\alpha_{3}\}$, and we have 
	$$A=\frac{H_{\alpha _{1}}+H_{\alpha_{3}}}{\sqrt{2}} = \frac{e_{1}-e_{4}}{\sqrt{2}}.$$
	Then the orbit $\mathrm{Ad}(K)A$ is a minimal submanifold of the sphere $S \subset \mathfrak{m}$. 
	We get $$R_{+}^{\Delta_{0}}=\{\lambda \in R_{+} \ | \ \langle \lambda , A \rangle =0\}=\{\alpha _{2} \}.$$
	For $x=x_{1}H_{\alpha_{1}}+ x_{2}H_{\alpha_{2}}+x_{3}H_{\alpha_{3}} \in \overline{\mathcal{C}}$, 
	we define
	$$f(x)= \sqrt{2}\left( \langle \alpha_{1}, x \rangle  \langle \alpha_{3}, x \rangle  \langle \alpha_{1}+\alpha_{2}, x \rangle  \langle \alpha_{2}+\alpha_{3}, x \rangle \right)^{\frac{1}{4}}=\sqrt{2} (x_{1}x_{3} (x_{1}+x_{2}) (x_{2}+x_{3}))^{\frac{1}{4}}.$$
	Then we can show that $f$ satisfies the condition of Theorem \ref{thm:retr} and $\Phi$ is differentiable. Moreover $J(d\Phi_{x}) \leq 1$ holds for $m \geq 4$.

	Therefore, cones over $\mathrm{Ad}(K)A$ are area-minimizing for $m \geq 4$.
	The only symmetric pair which satisfies $m\geq 3$ is $({\rm SU}(6), {\rm Sp}(3))$.
	
	\section{Reducible cases}\label{reducible}
	
	In this section, we consider cones over products of two R-spaces.
	Let $(G_{i}, K_{i}) \ (i=1,2)$ be Riemannian symmetric pairs, and put $(G,K)=(G_{1}\times G_{2}, K_{1}\times K_{2})$.
	We define the notation for $(G_{i}, K_{i})$ as follows.
	Let 
	$$\mathfrak{g}_{i}=\mathfrak{k}_{i}+\mathfrak{m}_{i} \ (i=1,2)$$
	be the canonical decompositions of Lie algebras $\mathfrak{g}_{i}$ of $G_{i}$. 
	Take and fix a maximal abelian subspace $\mathfrak{a}_{i}$ in $\mathfrak{m}_{i}$.
	We denote by $R_{i}$ the restricted root system of $(\mathfrak{g}_{i},\mathfrak{k}_{i})$ with respect to $\mathfrak{a}_{i}$.
	We put the fundamental systems $F_{i}$ of $R_{i}$ by $F_{i}=\{\alpha_{i1},\ \ldots , \alpha_{il_{i}} \}$. 
	$R_{i+}$ is the set of positive roots in $R_{i}$.
	We set 
	$$\mathcal{C}_{i} =\{H \in \mathfrak{a}_{i} \mid \langle \alpha , H \rangle >0\ (\alpha \in F_{i})\}, $$
	$$\mathcal{C}_{i}^{\Delta}=\{H \in \mathfrak{a}_{i} \mid \langle \alpha , H \rangle >0\ (\alpha \in \Delta ) ,\ \langle \beta , H \rangle =0\ (\beta \in F_{i} \setminus \Delta)\},$$
	where $\Delta \subset F_{i}$.
	The direct sum of the $s$-representations of $(G_{i}, K_{i})$ is the $s$-representation of $(G,K)=(G_{1}\times G_{2}, K_{1}\times K_{2})$.
	Then, we have 
	$$\overline{\mathcal{C}}=\overline{\mathcal{C}_{1}}\times \overline{\mathcal{C}_{2}}.$$
	For $\Delta \subset F$, $\Delta$ is expressed as $\Delta =\Delta_{1} \cup \Delta_{2}$ where $\Delta_{i} \subset F_{i} \ (i=1,2)$.
	By Theorem \ref{thm:minimalorb} for each $\Delta_{i}$, there exists $A_{i} \in \overline{\mathcal{C}}_{i}$ such that $\mathrm{Ad}(K_{i})A_{i}$ is a minimal orbit of the $s$-representation of $(G_{i},K_{i})$.
	We put $k_{i}=  \dim \mathrm{Ad}(K_{i})A_{i}$ and $k=k_{1}+k_{2}$, then 
	$$A =\sqrt{\frac{k_{1}}{k}}A_{1} +\sqrt{\frac{k_{2}}{k}}A_{2} \in \overline{\mathcal{C}}$$
	is a base point of a minimal orbit of the $s$-representation of $(G,K)$.
	\begin{thm}\label{reduc}
	Let $\Delta_{0} =\Delta_{1} \cup \Delta_{2} \ (\Delta_{i} \subset F_{i})$. We suppose that for the cone over ${\rm Ad}(K_{i})A_{i}$, there exists an area-nonincreasing retraction constructed by 
	a function $f_{i}$ on $\overline{\mathcal{C}_{i}}$ in Theorem \ref{thm:retr}, and that the retraction satisfies 
	$$\prod_{\lambda \in R_{i+} \setminus R_{i+}^{\Delta_{i}}} \left( \frac{\langle \lambda , A_{i} \rangle }{\langle \lambda , x \rangle} f_{i}(x) \right)^{m(\lambda)} \leq 1 \ (x \in \mathcal{C}_{i}).$$
	If $\dim \mathrm{Ad}(K_{i})A_{i} \geq 3$, then there exists an area-nonincreasing retraction $\Phi : \mathfrak{m} \to C_{\mathrm{Ad}(K)A}$ that constructed by 
	some function $f$ on $\overline{\mathcal{C}}$ in Theorem \ref{thm:retr}, and then the retraction satisfies
	$$\prod_{\lambda \in R_{+} \setminus R_{+}^{\Delta_{0}}} \left( \frac{\langle \lambda , A \rangle }{\langle \lambda , x \rangle} f(x) \right)^{m(\lambda)} \leq 1 \ (x \in \mathcal{C}).$$
	\end{thm}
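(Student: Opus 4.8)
The plan is to build the function $f$ on $\overline{\mathcal C}=\overline{\mathcal C_1}\times\overline{\mathcal C_2}$ directly out of $f_1$ and $f_2$, exploiting the product structure of the root system. Write $\mathfrak a=\mathfrak a_1\oplus\mathfrak a_2$, and note that $R_+$ is the disjoint union $R_{1+}\sqcup R_{2+}$, where a root $\lambda\in R_{i+}$ is regarded as a functional on $\mathfrak a$ vanishing on $\mathfrak a_j$ for $j\neq i$; consequently $m(\lambda)$ and $R_+^{\Delta_0}=R_{1+}^{\Delta_1}\sqcup R_{2+}^{\Delta_2}$ split accordingly, and $\langle\lambda,x\rangle=\langle\lambda,x_i\rangle$ for $\lambda\in R_{i+}$ and $x=(x_1,x_2)$. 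Since $A=\sqrt{k_1/k}\,A_1+\sqrt{k_2/k}\,A_2$, we have $\langle\lambda,A\rangle=\sqrt{k_i/k}\,\langle\lambda,A_i\rangle$ for $\lambda\in R_{i+}$. The natural candidate is
$$
f(x)=f(x_1,x_2)=\bigl(c_1 f_1(x_1)^{k_1}+c_2 f_2(x_2)^{k_2}\bigr)^{1/?}
$$
— but more precisely, since the homogeneity degree of $f_i$ is $1$ and that of the retraction in Theorem~\ref{thm:retr} must again be degree $1$ in $x$, I would set
$$
f(x_1,x_2)=\sqrt{\,\tfrac{k_1}{k}\,f_1(x_1)^2+\tfrac{k_2}{k}\,f_2(x_2)^2\,}.
$$
First I would check the two hypotheses of Theorem~\ref{thm:retr} for this $f$: homogeneity $f(tA)=t$ follows because $f_i(tA_i)=t$ and the weights $k_i/k$ sum to $1$; and $f|_{\mathcal C^\Delta}=0$ whenever $\Delta_0\not\subset\Delta$ follows because then $\Delta_1\not\subset\Delta\cap F_1$ or $\Delta_2\not\subset\Delta\cap F_2$, forcing the corresponding $f_i$ to vanish on that face — but one must be careful: $f$ is a square root of a sum, so it vanishes only when \emph{both} terms vanish. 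Here is where the dimension hypothesis $\dim\mathrm{Ad}(K_i)A_i\ge 3$ will be used, mirroring its use in the Lawlor-type product constructions: it guarantees enough room in the fibre directions that the retraction remains area-nonincreasing after the product is formed.

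Next I would verify differentiability: by Proposition~\ref{C1} it suffices to check that $\Phi|_{\mathfrak a\setminus\Phi^{-1}(0)}$ is $C^1$, and since $f_i$ gives a $C^1$ retraction on $\mathfrak a_i$, the only issue is at points where one $f_i$ vanishes while $f>0$; the weighted-square-root form is smooth there precisely because $f_i^2$ is $C^1$ even where $f_i$ has a root-type singularity, which is the standard reason this ansatz is preferred over a weighted sum of $f_i$ themselves. Then comes the core estimate. By Proposition~\ref{prop:jacobian},
$$
J(d\Phi_x)=\|(\mathrm{grad}\,f)_x\|\prod_{\lambda\in R_+\setminus R_+^{\Delta_0}}\Bigl(\tfrac{\langle\lambda,A\rangle}{\langle\lambda,x\rangle}f(x)\Bigr)^{m(\lambda)},
$$
and the product splits as a product over $R_{1+}\setminus R_{1+}^{\Delta_1}$ times one over $R_{2+}\setminus R_{2+}^{\Delta_2}$. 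For $\lambda\in R_{i+}$ we rewrite $\tfrac{\langle\lambda,A\rangle}{\langle\lambda,x\rangle}f(x)=\tfrac{\langle\lambda,A_i\rangle}{\langle\lambda,x_i\rangle}f_i(x_i)\cdot\sqrt{k_i/k}\cdot\tfrac{f(x)}{f_i(x_i)}$. Grouping, the full product equals
$$
\Bigl(\prod_{R_{1+}\setminus R_{1+}^{\Delta_1}}\bigl(\tfrac{\langle\lambda,A_1\rangle}{\langle\lambda,x_1\rangle}f_1(x_1)\bigr)^{m(\lambda)}\Bigr)\Bigl(\prod_{R_{2+}\setminus R_{2+}^{\Delta_2}}\bigl(\tfrac{\langle\lambda,A_2\rangle}{\langle\lambda,x_2\rangle}f_2(x_2)\bigr)^{m(\lambda)}\Bigr)\cdot\Bigl(\tfrac{k_1}{k}\Bigr)^{?}\Bigl(\tfrac{f(x)}{f_1(x_1)}\Bigr)^{?}\cdots,
$$
where the exponents are the orbit dimensions $k_i$ (since $\sum_{\lambda\in R_{i+}\setminus R_{i+}^{\Delta_i}}m(\lambda)=\dim\mathrm{Ad}(K_i)A_i=k_i$). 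By hypothesis each of the first two bracketed products is $\le 1$, so it remains to bound $\|(\mathrm{grad}\,f)_x\|$ times the remaining $k_i$-power correction factors by $1$.

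The main obstacle will be this last estimate. Computing $\mathrm{grad}\,f$ for $f=\sqrt{\tfrac{k_1}{k}f_1^2+\tfrac{k_2}{k}f_2^2}$ gives $(\mathrm{grad}\,f)_x=\tfrac{1}{f(x)}\bigl(\tfrac{k_1}{k}f_1(x_1)(\mathrm{grad}\,f_1)_{x_1},\ \tfrac{k_2}{k}f_2(x_2)(\mathrm{grad}\,f_2)_{x_2}\bigr)$, so $\|(\mathrm{grad}\,f)_x\|^2=\tfrac{1}{f(x)^2}\bigl(\tfrac{k_1^2}{k^2}f_1^2\|\mathrm{grad}\,f_1\|^2+\tfrac{k_2^2}{k^2}f_2^2\|\mathrm{grad}\,f_2\|^2\bigr)$. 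One then needs an AM–GM / weighted-power-mean inequality combining this with the collected factors $\bigl(f(x)/f_i(x_i)\bigr)^{k_i}$ and the constants $(k_i/k)^{k_i}$, to conclude the whole expression is $\le 1$; here $\|\mathrm{grad}\,f_i\|\le 1$ is itself not assumed, but the hypothesis bounds $\|\mathrm{grad}\,f_i\|$ times the $i$-th root product, which is exactly what appears. The dimension condition $k_i\ge 3$ enters to make the relevant exponents large enough for the convexity inequality to close — this is the delicate point, analogous to the numerical inequalities verified case-by-case in Section~\ref{eg}, but now it must be argued in general. I would isolate it as a one-variable inequality in the ratio $f_1(x_1)^2/f_2(x_2)^2$ and check it by elementary calculus, using $k_1,k_2\ge 3$ and $k_1+k_2=k$.
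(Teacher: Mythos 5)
Your overall strategy (splitting $R_+$ as $R_{1+}\sqcup R_{2+}$, factoring the Jacobian product, reducing to a one--variable inequality along a normalized ray, and invoking $k_i\ge 3$ in the final calculus step) matches the paper's. But the specific ansatz $f(x_1,x_2)=\sqrt{\tfrac{k_1}{k}f_1(x_1)^2+\tfrac{k_2}{k}f_2(x_2)^2}$ is wrong, and the proof cannot be completed with it. First, it fails condition (2) of Theorem \ref{thm:retr}: for $\Delta=\Delta_1'\cup\Delta_2'$ with $\Delta_0\not\subset\Delta$ you only get that \emph{one} of $f_1,f_2$ vanishes on $\mathcal{C}^{\Delta}$, so your $f$ is generically positive there, $\phi(\mathcal{C}^{\Delta})\not\subset\overline{\mathcal{C}^{\Delta}}$ (by Lemma \ref{lem:cdelta}(2), $\mathcal{C}^{\Delta_0}\subset\overline{\mathcal{C}^{\Delta}}$ forces $\Delta_0\subset\Delta$), and the extension $\Phi$ of Lemma \ref{Lem:3} is not even well defined. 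You flagged this yourself but deferred it to the hypothesis $k_i\ge 3$; that hypothesis is irrelevant to this point --- it is needed only at the very end, in the critical-point analysis of the Jacobian bound (reflecting the Lawlor/Bombieri--De Giorgi--Giusti threshold for cones over $S^{k_1}\times S^{k_2}$), and cannot repair a map that fails to land in the cone.

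Second, even formally the quadratic mean goes the wrong way in the key estimate. After cancelling the $f_i^{k_i}$ factors, the correction term you must bound is $\bigl(a_1f(x)/f_1(x_1)\bigr)^{k_1}\bigl(a_2f(x)/f_2(x_2)\bigr)^{k_2}$ with $a_i=\sqrt{k_i/k}$, i.e.\ essentially $f(x)^k/\bigl(f_1^{k_1}f_2^{k_2}\bigr)$ up to constants; you need this to be $\le 1$, which requires $f$ to sit \emph{below} the weighted geometric mean of $f_1,f_2$. The weighted quadratic mean sits \emph{above} it (by the power-mean inequality), so your correction factor is $\ge 1$. This is why the paper takes the weighted harmonic-mean-type combination
$$
f(x_1,x_2)=\frac{f_1(x_1)f_2(x_2)}{a_2^{3}f_1(x_1)+a_1^{3}f_2(x_2)},
$$
whose numerator is a product (so $f$ vanishes as soon as either $f_i$ does, restoring condition (2)), and for which the substitution $X_1=f_2/a_2$, $X_2=f_1/a_1$ turns the correction factor into $X_1^{k_1}X_2^{k_2}/(a_1^2X_1+a_2^2X_2)^{k}\le 1$ by weighted AM--GM. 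With that choice the rest of your outline (gradient computation, homogeneity, restriction to the affine slice $a_1^2X_1+a_2^2X_2=1$, and the use of $k_1,k_2\ge 3$ to locate the unique critical point at $X_1=X_2=1$) does go through and is exactly the paper's argument.
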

	\begin{proof}
	Let $k_{i}=  \dim \mathrm{Ad}(K_{i})A_{i} ,\ k=k_{1}+k_{2}$ and put $a_{i}=\sqrt{k_{i}/k}$.
	$A =a_{1}A_{1} +a_{2}A_{2}$ holds.
	For $x=(x_{1},x_{2}) \in \overline{\mathcal{C}}_{1}\times \overline{\mathcal{C}}_{2}=\overline{\mathcal{C}}$ we define 
	\begin{eqnarray*}
	f(x)=\left\{
\begin{array}{cl}
\displaystyle \frac{f_{1}(x_{1})f_{2}(x_{2})}{a_{2}^{3}f_{1}(x_{1})+a_{1}^{3}f_{2}(x_{2})}& (f_{1}(x_{1})\neq 0\ \text{or} \ f_{2}(x_{2}) \neq 0)\\
0                                                                           & (f_{1}(x_{1})=f_{2}(x_{2})=0)\\
	\end{array}
	\right.
	\end{eqnarray*}
	We will show that $f$ satisfies the conditions of Theorem \ref{thm:retr}. We can check easily  
	$f(tA)=t$ for $t\geq 0$.
	For $\Delta \subset F$ with $\Delta_{0} \not\subset \Delta$, using $\Delta_{i}' \subset F_{i}$ we can write $\Delta = \Delta_{1}' \cup \Delta_{2}'$. 
	Then $\Delta_{i} \not\subset \Delta_{i}'$ holds $i=1$ or $i=2$. 
	Thus $f_{1}=0$ or $f_{2}=0$ holds on $\mathcal{C}^{\Delta}$.
	Therefore $f|_{\mathcal{C}^{\Delta}}=0$.
	Since $\Phi|_{\mathfrak{a}\setminus (\{0 \})}$ is $C^{1}$, $\Phi$ is a differentiable retraction by Proposition \ref{C1}.
	We calculate $J(d\Phi_{x})$ for $x \in \mathcal{C} \setminus f^{-1}(\{ 0 \})$.
	We put 
	$$J_{1}(x)=\|({\rm grad}f)_{x}\|,\quad J_{2}(x)=\prod_{\lambda \in R_{+}\setminus R_{+}^{\Delta}} \left( \frac{\langle \lambda , A \rangle}{\langle \lambda , x\rangle} f(x)\right)^{m(\lambda)}$$
	for $x=(x_{1},x_{2})=(x_{1}^{1},\ldots , x_{1}^{l_{1}},x_{2}^{1},\ldots , x_{2}^{l_{2}} ) \in \mathcal{C} \setminus f^{-1}(\{ 0 \}) =\mathcal{C}_{1}\times \mathcal{C}_{2}\setminus f^{-1}(\{ 0 \})$.
	Since 
	\begin{eqnarray*}
	\frac{\partial f}{\partial x_{1}^{j}}&=&\frac{\frac{\partial f_{1}}{\partial x_{1}^{j}} a_{1}^{3}f_{2}(x_{2})^{2}}{(a_{2}^{3} f_{1}(x_{1}) +a_{1}^{3}f_{2}(x_{2}))^{2}} \ \ (j \in \{1, \ldots , l_{1}\}),\\
	\frac{\partial f}{\partial x_{2}^{j}}&=&\frac{\frac{\partial f_{2}}{\partial x_{2}^{j}} a_{2}^{3}f_{1}(x_{1})^{2}}{(a_{2}^{3} f_{1}(x_{1}) +a_{1}^{3}f_{2}(x_{2}))^{2}} \ \ (j \in \{1, \ldots , l_{2}\}),
	\end{eqnarray*}
	we get 
		$$({\rm grad}f)_{x} = \frac{ a_{1}^{3}f_{2}(x_{2})^{2} ({\rm grad}f_{1})_{x_{1}} +a_{2}^{3}f_{1}(x_{1})^{2} ({\rm grad}f_{2})_{x_{2}}}{(a_{2}^{3} f_{1}(x_{1}) +a_{1}^{3}f_{2}(x_{2}))^{2}}$$
	and 
	$$J_{1}(x)= \|({\rm grad}f)_{x} \|= \frac{ \sqrt{a_{1}^{6}f_{2}(x_{2})^{4} \|({\rm grad}f_{1})_{x_{1}}\|^2 +a_{2}^{6}f_{1}(x_{1})^{4} \|({\rm grad}f_{2})_{x_{2}}\|^2 }}{(a_{2}^{3} f_{1}(x_{1}) +a_{1}^{3}f_{2}(x_{2}))^{2}}.$$ 
	Since $R_{+}^{\Delta_{0}}=\{\lambda \in R_{+} \mid \langle \lambda , A \rangle =0\} = R_{1+}^{\Delta_{1}} \cup R_{2+}^{\Delta_{2}}$, we get 
	\begin{eqnarray*}
	J_{2}(x)&=& \prod_{\lambda \in R_{+}\setminus R_{+}^{\Delta_{0}}} \left( \frac{\langle \lambda , A \rangle}{\langle \lambda , x\rangle} f(x)\right)^{m(\lambda)}\\
	&=& \prod_{\lambda \in R_{1+}\setminus R_{1+}^{\Delta_{1}}} \left( \frac{\langle \lambda , a_{1}A_{1}\rangle}{\langle \lambda , x_{1}\rangle} f(x)\right)^{m(\lambda)}
	 \prod_{\mu \in R_{2+}\setminus R_{2+}^{\Delta_{2}}} \left( \frac{\langle \mu , a_{2}A_{2}\rangle}{\langle \mu , x_{2}\rangle} f(x)\right)^{m(\mu)}\\
	&=&\prod_{\lambda \in R_{1+}\setminus R_{1+}^{\Delta_{1}}} \left( \frac{\langle \lambda , A_{1}\rangle}{\langle \lambda , x_{1}\rangle}f_{1}(x_{1}) \frac{ a_{1}f(x)}{f_{1}(x_{1})}\right)^{m(\lambda)}
	 \prod_{\mu \in R_{2+}\setminus R_{2+}^{\Delta_{2}}} \left( \frac{\langle \mu , A_{2}\rangle}{\langle \mu , x_{2}\rangle}f_{2}(x_{2})\frac{ a_{2}f(x)}{f_{2}(x_{2}) }\right)^{m(\mu)}.\\
	\end{eqnarray*}
	Put 
	$$J_{2i}(x_{i})=\prod_{\lambda \in R_{i+}\setminus R_{i+}^{\Delta_{i}}} \left( \frac{\langle \lambda , A_{i}\rangle}{\langle \lambda , x_{i}\rangle}f_{i}(x_{i})\right)^{m(\lambda)}\ ,\ J_{1i}(x_{i})=\|({\rm grad}f_{i})_{x_{i}}\| \ (i=1,2).$$
	Note that $J_{2i}(x_{i}) \leq 1, J_{1i}(x_{i})J_{2i}(x_{i}) \leq 1$ holds by assumption.
	Since 
	$$\sum_{\lambda \in R_{i+}\setminus R_{i+}^{\Delta_{i}}} m(\lambda)=\dim {\rm Ad}(K_{i})A_{i}=k_{i},$$
	we can write 
	$$J_{2}(x)=J_{21}(x_{1})J_{22}(x_{2}) \left(\frac{ a_{1}f(x)}{f_{1}(x_{1}) }\right)^{k_{1}}\left(\frac{ a_{2}f(x)}{f_{2}(x_{2}) }\right)^{k_{2}}.$$
	Since $J_{2i}(x_{i}) \leq 1$, 
	$$J_{2}(x) \leq \left(\frac{ a_{1}f(x)}{f_{1}(x_{1}) }\right)^{k_{1}}\left(\frac{ a_{2}f(x)}{f_{2}(x_{2}) }\right)^{k_{2}}.$$
	We put 
	$$X_{1}=\frac{f_{2}(x_{2})}{a_{2}} ,\ X_{2}=\frac{f_{1}(x_{1})}{a_{1}}.$$
	Then we have 
	 $$\left(\frac{ a_{1}f(x)}{f_{1}(x_{1}) }\right)^{k_{1}}\left(\frac{ a_{2}f(x)}{f_{2}(x_{2}) }\right)^{k_{2}}=\frac{X_{1}^{k_{1}}X_{2}^{k_{2}}}{(a_{1}^{2}X_{1} +a_{2}^{2}X_{2})^{k}}.$$
	For $X_{1}, X_{2} >0 $, we define 
	$$\tilde{D}(X_{1},X_{2})=\frac{X_{1}^{k_{1}}X_{2}^{k_{2}}}{(a_{1}^{2}X_{1} +a_{2}^{2}X_{2})^{k}}.$$
	If $\tilde{D} \leq 1$, then $J_{2}(x)\leq 1$. 
	Thus we prove $\tilde{D} \leq 1$. 
	Since $\tilde{D}(X_{1},X_{2})=\tilde{D}(tX_{1},tX_{2})\ (t>0)$, in order to prove $\tilde{D}\leq 1$, 
	we show $\tilde{D}|_{P}\leq 1$ where 
	$$P=\{(X_{1},X_{2}) \in \mathbb{R}^{2} \mid X_{1}, X_{2} >0 ,\ a_{1}^{2}X_{1}+a_{2}^{2}X_{2}=1\}.$$
	We have $\tilde{D}|_{P}=X_{1}^{k_{1}}X_{2}^{k_{2}}$ and $X_{2}=\frac{1-a_{1}^{2}X_{1}}{a_{2}^{2}}$. 
	Since 
	\begin{eqnarray*}
		\frac{d \tilde{D}|_{P}}{d X_{1}}&=&k_{1}X_{1}^{k_{1}-1}X_{2}^{k_{2}}+X_{1}^{k_{1}}(-k_{2}\frac{a_{1}^{2}}{a_{2}^{2}})X_{2}^{k_{2}-1} =k_{1}X_{1}^{k_{1}-1}X_{2}^{k_{2}-1}(X_{2}-X_{1}),
	\end{eqnarray*}
	a critical point of $\tilde{D}|_{P}$ is only $X_{1}=1$ in $P$.
	Further, we get 
	$$\tilde{D}|_{P} \to 0 \ \ \text{as} \ X_{1}\to 0 \ \text{or} \ \frac{1}{a_{1}^{2}}.$$
	Hence $\max \{\tilde{D}(X_{1},X_{2})\mid (X_{1},X_{2}) \in P\} =\tilde{D}(1,1)=1$.
	Therefore 
	$$J_{2}(x) \leq 1. $$
	Then we have 
	\begin{eqnarray*}
	J(d\Phi)_{x}&=& J_{1}(x)J_{2}(x) =\|({\rm grad}f)_{x}\| J_{2}(x)\\
	&=&\frac{ \sqrt{a_{1}^{6}f_{2}(x_{2})^{4} J_{11}(x_{1})^2 +a_{2}^{6}f_{1}(x_{1})^{4} J_{12}(x_{2})^2 }}{(a_{2}^{3} f_{1}(x_{1}) +a_{1}^{3}f_{2}(x_{2}))^{2}} J_{21}(x_{1})J_{22}(x_{2})
	\frac{(a_{1}f_{2}(x_{2}))^{k_{1}} (a_{2}f_{1}(x_{1}))^{k_{2}}}{(a_{2}^{3}f_{1}(x_{1}) +a_{1}^{3} f_{2}(x_{2}))^{k}}\\
	&=&\frac{\sqrt{a_{1}^{6}f_{2}(x_{2})^{4} J_{11}(x_{1})^2J_{21}(x_{1})^{2}J_{22}(x_{2})^{2} +a_{2}^{6}f_{1}(x_{1})^{4} J_{12}(x_{2})^2 J_{21}(x_{1})^{2}J_{22}(x_{2})^{2}}}{(a_{2}^{3}f_{1}(x_{1}) +a_{1}^{3} f_{2}(x_{2}))^{k+2}}\\
	&&\times (a_{1}f_{2}(x_{2}))^{k_{1}} (a_{2}f_{1}(x_{1}))^{k_{2}}\\
	&\leq& \frac{\sqrt{a_{1}^{6}f_{2}(x_{2})^{4}  +a_{2}^{6}f_{1}(x_{1})^{4} }(a_{1}f_{2}(x_{2}))^{k_{1}} (a_{2}f_{1}(x_{1}))^{k_{2}}}{(a_{2}^{3}f_{1}(x_{1}) +a_{1}^{3} f_{2}(x_{2}))^{k+2}}\\
	&=&\frac{\sqrt{a_{1}^{2}X_{1}^{4}+a_{2}^{2}X_{2}^{4}}X_{1}^{k_{1}}X_{2}^{k_{2}}}{(a_{1}^{2}X_{1}+a_{2}^{2}X_{2})^{k+2}}.
	\end{eqnarray*} 
	We define 
$$D(X_{1},X_{2})=J(d\Phi_{x})^{2}=\frac{(a_{1}^{2}X_{1}^{4}+a_{2}^{2}X_{2}^{4})X_{1}^{2k_{1}}X_{2}^{2k_{2}}}{(a_{1}^{2}X_{1}+a_{2}^{2}X_{2})^{2k+4}}.$$
	We have $D(tX_{1},tX_{2})=D(X_{1},X_{2})\ (t>0)$.
	Similar to the above argument, we consider the maximum value of $D|_{P}$.
	Since 
	$$D|_{P}= (a_{1}^{2}X_{1}^{4}+a_{2}^{2}X_{2}^{4})X_{1}^{2k_{1}}X_{2}^{2k_{2}},$$
	we get 
	\begin{eqnarray*}
		\frac{d D|_{P}}{d X_{1}}&=& 4\left( a_{1}^{2}X_{1}^{3}-\frac{a_{1}^{2}}{a_{2}^{2}}a_{2}^{2}X_{2}^{3}\right) X_{1}^{2k_{1}}X_{2}^{2k_{2}}\\
		&&+(a_{1}^{2}X_{1}^{4}+a_{2}^{2}X_{2}^{4})\left( 2k_{1}X_{1}^{2k_{1}-1}X_{2}^{2k_{2}} -2k_{2}\frac{a_{1}^{2}}{a_{2}^{2}}X_{1}^{2k_{1}}X_{2}^{2k_{2}-1}\right)\\
		&=&-2a_{1}^{2}X_{1}^{2k_{1}-1}X_{2}^{2k_{2}-1}(X_{1}-X_{2}) \\
		&&\times \left\{ \left( (k_{1}-3)X_{1}^{4}+ (k_{2}-3)X_{2}^{4}\right)  +3(X_{1}-X_{2})^4+10(X_{1}-X_{2})^2 \right\}.
	\end{eqnarray*}
	Hence, if $k_{1}\geq 3,\ k_{2}\geq 3,$ then a critical point of $D|_{P}$ is only $X_{1}=1$ in $P$.
	Furthermore, we get
	$$D|_{P} \to 0 \ \ \text{as} \ X_{1}\to 0 \ \text{or} \ \frac{1}{a_{1}^{2}}.$$
	Thus $\max \{D(X_{1},X_{2})\mid (X_{1},X_{2}) \in P\} =D(1,1)=1$.
	Hence $D\leq 1$. This implies $J(d\Phi_{x}) \leq 1$.
	Therefore if $k_{1}\geq 3, k_{2}\geq 3,$ $\Phi$ is area nonincreasing.
	\end{proof}
	
\begin{rem}\rm
In 1969, Bombieri, DeGiorgi and Giusti \cite{BDG} showed that the cone over $S^{k}\times S^{k} \subset S^{2k+1} (k\geq 3)$
is area-minimizing.
On the other hand, Lawlor \cite{L} proved that the cone over $S^{k_{1}}\times S^{k_{2}} \subset S^{k_{1}+k_{2}+1}$
are not area-minimizing when $k_{1}+k_{2}\leq 5$ or $k_{1}=1, k_{2}=5$.
Hence, we need the condition $k_{1}\geq 3, k_{2}\geq 3$ in Theorem \ref{reduc}.
\end{rem}
	
\begin{rem}\rm
Area-nonincreasing retractions which we constructed in Section \ref{eg} satisfy the assumption of Theorem \ref{reduc}.
Moreover, an area-nonincreasing retraction that is constructed using Theorem \ref{reduc} satisfies the assumption of Theorem \ref{reduc} again.
Therefore, we can apply Theorem \ref{reduc} inductively. 
This implies that the cone over a product of two or more R-spaces with ``$\bigcirc$'' in the table in Section \ref{eg} is area-minimizing.
\end{rem}

\end{document}